\theoremstyle{definition}
\newtheorem{thm}{Theorem}[section]
\newtheorem{mydef}{Definition}[section]
\newtheorem{rmk}{Remark}[section]
\newtheorem{prop}{Proposition}[section]
\newtheorem{lem}{Lemma}[section]
\title{A pair of second-order, complex-valued, $\Nop$-split
		operator-splitting methods
		\thanks{This work was funded by the Natural Sciences and
				Engineering Research Council of Canada through its Discovery
				Grant Program [RGPN-2020-04467] (RJS).}}
\author{ \href{}{\hspace{1mm}Raymond J. Spiteri}\\
	Department of Computer Science\\
	University of Saskatchewan, Saskatoon, SK, Canada\\
	\\
	\texttt{spiteri@cs.usask.ca} \\
	%% examples of more authors
	\And
	\href{}{\hspace{1mm}Siqi Wei}\\
	Department of Mathematics and Statistics \\ 
	University of Saskatchewan, Saskatoon, SK, Canada \\\\
	\texttt{siqi.wei@usask.ca} \\
}
\date{}
\begin{document}
\maketitle

% REQUIRED
% not exceeding 250 words; at 158
\begin{abstract}
  The use of operator-splitting methods to solve differential
  equations is widespread, but the methods are generally only defined
  for a given number of operators, most commonly two. Most
  operator-splitting methods are not generalizable to problems with
  $\Nop$ operators for arbitrary~$\Nop$. In fact, there are only two
  known methods that can be applied to general $\Nop$-split problems:
  the first-order Lie--Trotter (or Godunov) method and the
  second-order Strang (or Strang--Marchuk) method. In this paper, we
  derive two second-order operator-splitting methods that also
  generalize to $\Nop$-split problems. These methods are complex
  valued but have positive real parts, giving them favorable stability
  properties, and require few sub-integrations per stage, making them
  computationally inexpensive. They can also be used as base methods
  from which to construct higher-order $\Nop$-split operator-splitting
  methods with positive real parts. We verify the orders of accuracy
  of these new $\Nop$-split methods and demonstrate their favorable
  efficiency properties against well-known real-valued
  operator-splitting methods on both real-valued and complex-valued
  differential equations.
\end{abstract}

%% REQUIRED
%\begin{keywords}
%  operator splitting, fractional-step methods, $\Nop$-splitting,
%  complex-valued coefficients, complex-valued differential equations
%\end{keywords}
%
%% REQUIRED
%\begin{MSCcodes}
% 65L05, 65L20, 65M12, 65N40, 65Y20
%\end{MSCcodes}

%%%%%%%%%%%%%%%%%%%%%%%%%%%%%%%%%%%%%%%%%%%%%%%%%%%%%%%%%%%%%%%%%%%%%%%%%%%%%%%%%%%%
%%%%
%%%% Abstract
%%%%
%%%%%%%%%%%%%%%%%%%%%%%%%%%%%%%%%%%%%%%%%%%%%%%%%%%%%%%%%%%%%%%%%%%%%%%%%%%%%%%%%%%%

%\section{Abstract}
%\label{complexLT_abs}

%%%%%%%%%%%%%%%%%%%%%%%%%%%%%%%%%%%%%%%%%%%%%%%%%%%%%%%%%%%%%%%%%%%%%%%%%%%%%%%%%%%%
%%%%
%%%% Introduction
%%%%
%%%%%%%%%%%%%%%%%%%%%%%%%%%%%%%%%%%%%%%%%%%%%%%%%%%%%%%%%%%%%%%%%%%%%%%%%%%%%%%%%%%%

\section{Introduction}
\label{complexLT_intro}

Operator-splitting methods are widely used to solve some of the most
challenging differential equations that arise in science and
engineering. Such methods offer a divide-and-conquer approach to solve
problems that may otherwise be unsolvable monolithically due to
prohibitive memory or wall-clock time requirements or because the
problems themselves are solved by separate software libraries as in
\emph{co-simulation}~\cite{Gomes2018-coSimulation}.

Applications of operator-splitting methods often split the
differential equation into two operators, e.g., reaction/diffusion,
linear/nonlinear, stiff/non-stiff, etc. However, many multi-physics
problems can often be naturally divided into more than two operators,
e.g., advection/reaction/diffusion, multiple scales, multiple
dimensions, etc., that could benefit from the use of specialized
algorithms or numerical methods. Furthermore, it is often not clear
how to sort the relevant contributions from multiple operators into
just two.

In this study, we focus on operator-splitting methods for
$\Nop$-additively split ordinary differential equations (ODEs)
\begin{equation}
	\label{eq:Nsplit_ODE}
	\dv{y}{t} = \mathcal{F}(t,y) = \sum\limits_{\ell=1}^{\Nop} \Fl{\ell}(t, y). 
\end{equation}
The most common operator-splitting methods have been derived $\Nop=2$
operators, possibly due to the significant increase in difficulty in
derivation and implementation with increasing $\Nop$. However, both
the first-order Lie--Trotter~\cite{Lie1888,Trotter1958} (or
Godunov~\cite{Godunov1959}) method and the second-order
Strang~\cite{Strang1968} (or Strang--Marchuk~\cite{Marchuk1971})
method can be applied directly to general $\Nop$-split problems. This
observation makes these methods natural choices for solving
$\Nop$-split problems. In this paper, we present a pair of
second-order operator-splitting methods that are also generalizable to
$\Nop$-split problems. The methods have complex-valued coefficients
with positive real parts and are conjugates of each other.

Operator-splitting methods are usually derived in two ways:
\begin{enumerate}
	\item Solve the order conditions directly. 
	\item Form compositions of existing methods.
\end{enumerate}

For $\Nop$-split problems, the number of equations in the order
conditions increases dramatically as $\Nop$ increases. Accordingly, it
is challenging to derive high-order $\Nop$-split methods by solving
the order conditions directly. When successful, any free parameters of
the method left over after the order conditions are satisfied can be
used to optimize certain method properties, such as leading error
measures or
stability~\cite{auzinger2014,auzinger2015,auzinger2016practical,blanes2013complex}.
Alternatively, if a (lower-order) method is known, it can be used as a
base method, from which higher-order methods can be derived through
composition~\cite{hairer2006,Hansen2009,castella2009}.  However, this
approach can be computationally expensive if the base method is
relatively expensive or if many compositions are required.

It is further known that operator-splitting methods with real
coefficients of order greater than two require backward-in-time
integration in each operator~\cite{goldman1996}. These backward
integrations can introduce instabilities for some applications, thus
reducing the appeal of such methods in practice. A possible remedy is
to use high-order complex-coefficient operator-splitting methods with
positive real parts. The performance of such methods compared with
real-valued operator-splitting methods has not been explored
extensively. Although complex-valued methods can have better stability
than real-valued methods with negative steps, the cost of complex
arithmetic can negate the stability advantages.

The goal of this paper is to propose a new pair of inexpensive
low-order base methods for $\Nop$-split problems and show their
effectiveness not only on their own but also as higher-order methods
obtained through a minimum number of compositions. The proposed
methods are second-order accurate, are conjugates of each other, and
have complex coefficients with positive real parts. We apply the
methods to real- and complex-valued differential equations and discuss
their performance.

The remainder of the paper is organized as follows. In
\cref{sec:complexLT_os}, we formally define $s$-stage, $\Nop$-split
operator-splitting methods and introduce necessary mathematical
background. In \cref{sec:complexLT_main}, we present the order
conditions of $\Nop$-split problems, derive a pair of second-order,
complex-valued methods for $\Nop$-split problems, and compose these
base methods to derive higher-order $\Nop$-split methods. In
\cref{sec:complexLT_num_exp}, we present two numerical examples using
the proposed methods and compare their performance with the Strang
method. In \cref{sec:complexLT_conclude}, we summarize our findings
and discuss potential future work.

%%%%%%%%%%%%%%%%%%%%%%%%%%%%%%%%%%%%%%%%%%%%%%%%%%%%%%%%%%%%%%%%%%%%%%%%%%%%%%%%%%%%
%%%%
%%%% Problem description 
%%%%
%%%%%%%%%%%%%%%%%%%%%%%%%%%%%%%%%%%%%%%%%%%%%%%%%%%%%%%%%%%%%%%%%%%%%%%%%%%%%%%%%%%%

\section{$\Nop$-split operator-splitting methods}
\label{sec:complexLT_os}

In this section, we give a formal definition of $\Nop$-split
operator-splitting methods and establish the notation used in the
following sections.

Let $\varphi_t^{[\ell]}$ be the exact flow of
$\displaystyle \dv{y^{[\ell]}}{t}= \Fl{\ell}(t,y^{[\ell]})$ for
$\ell=1,2,\dots, \Nop$. The majority of operator-splitting methods
focus on $\Nop = 2$. The first-order Lie--Trotter method to solve a
$2$-split ODE with a step size $\Dt$ can be written as the
composition of the flows,
\begin{equation*}
	\label{eq:LT}
	\Psi_{\Dt}^{\text{LT-2}} = \varphi_{\Dt}^{[2]} \circ \varphi_{\Dt}^{[1]}.
\end{equation*}

The second-order Strang method can be viewed as the composition of the
Lie--Trotter method with its adjoint over half steps:
\begin{equation*}
	\label{eq:Strang}
	\Psi_{\Dt}^{\text{S-N}} = \varphi_{\Dt/2}^{[1]} \circ \varphi_{\Dt/2}^{[2]} \circ \varphi_{\Dt/2}^{[2]} \circ \varphi_{\Dt/2}^{[1]} = \varphi_{\Dt/2}^{[1]} \circ \varphi_{\Dt}^{[2]}  \circ \varphi_{\Dt/2}^{[1]} .
\end{equation*} 
Both the Lie--Trotter method and the Strang methods can be generalized
to $\Nop$-split problems for arbitrary $\Nop$. Explicitly, the
$\Nop$-split Lie--Trotter method and Strang method are, respectively,
\begin{equation}
	\label{eq:LT_N} 
	\Psi_{\Dt}^{\text{LT-N}} = \varphi_{\Dt}^{[\Nop]}\circ \cdots \circ \varphi_{\Dt}^{[2]} \circ \varphi_{\Dt}^{[1]}.
\end{equation}

\begin{equation}
	\label{eq:Strang_N} 
	\Psi_{\Dt}^{\text{S-N}} =  \varphi_{\Dt/2}^{[1]}\circ \cdots \circ
	\varphi_{\Dt/2}^{[\Nop-1]} \circ \varphi_{\Dt}^{[\Nop]}\circ
	\varphi_{\Dt/2}^{[\Nop-1]} \circ \cdots \circ \varphi_{\Dt/2}^{[2]} \circ \varphi_{\Dt/2}^{[1]}.
\end{equation}

\begin{mydef}
	\label{def:os_n}
	Consider the $\Nop$-split ODE \cref{eq:Nsplit_ODE}. Let
	$\vphi{t}{\ell}$ be the exact flow of the subsystem
	$\displaystyle \dv{y^{[\ell]}}{t} = \Fl{\ell}(t,y^{[\ell]})$. We
	define an $s$-stage, $\Nop$-split operator-splitting method
	$\Psi_{\Dt}$ with step size $\Dt$ to be given by
	\begin{equation}
		\label{os_description}
		\Psi_{\Dt} = \vphi{\aaalpha{s}{\Nop}\Dt}{\Nop} \circ \cdots \circ \vphi{\aaalpha{s}{1}\Dt}{1} \circ \cdots \circ 
		\vphi{\aaalpha{1}{\Nop}\Dt}{\Nop} \circ \cdots \circ
		\vphi{\aaalpha{1}{1}\Dt}{1}, \qquad \ell=1,2,\dots,\Nop. 
	\end{equation}
	Hence, the operator-splitting method is determined by the
	coefficients
	$\{\aaalpha{k}{\ell}\}_{k=1,2,\dots,s}^{\ell = 1, 2,\dots, \Nop}$.
\end{mydef}

\begin{rmk}
	We note that the definition of a stage involves a complete cycle
	through the operators in order from $1$ to $\Nop$. However, cycling
	through the operators in an arbitrary order can be achieved (albeit
	at a nominal increase in the number of stages) by setting
	appropriate the coefficients $\aaalpha{k}{\ell}$ to zero.
\end{rmk}

Gr\"{o}bner's lemma can now be used to express the composition of the
exact flows in terms of exponential operators:
\begin{lem} \label{lem:Grobner} [Gr\"{o}bner 1960] Let
	$\varphi_s^{[1]}$ and $\varphi_t^{[2]}$ be the flows of the
	differential equations $\displaystyle \dv{\yl{1}}{t} = \Fl{1} (t, \yl{1})$ and
	$\displaystyle \dv{\yl{2}}{t} = \Fl{2} (t, \yl{2})$, respectively. For their
	composition, we then have
	\begin{equation*}
		(\varphi_{t}^{[2]} \circ \varphi_{s}^{[1]})(y_0) =  \exp(s
		\DOp{1})\exp(t \DOp{2}) \, y_0,
	\end{equation*}
	where
	$\displaystyle \DOp{\ell} = \sum\limits_{j} \Fl{\ell}_j \pdv{}{y_j}$
	is the Lie derivative.
\end{lem}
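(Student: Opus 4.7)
The plan is to establish first a single-flow Lie series identity,
$g(\varphi_t^{[\ell]}(y_0)) = \exp(t\, D^{[\ell]})\, g \big|_{y_0}$, for any smooth test function $g$, and then to deduce the composition formula by applying this identity twice --- once with $\ell=2$ and $g$ equal to the coordinate map, and once with $\ell=1$ and $g$ equal to the resulting function of the final state. For the non-autonomous setting of the excerpt, I would briefly freeze $t$ (or augment $y$ with a time coordinate) so that the Lie derivative $D^{[\ell]}$ is an unambiguous first-order differential operator in $y$.

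For the single-flow identity, I would set $u(t,y) := g(\varphi_t^{[\ell]}(y))$ and differentiate in $t$ using the chain rule together with the defining ODE $\partial_t \varphi_t^{[\ell]} = F^{[\ell]} \circ \varphi_t^{[\ell]}$, giving $\partial_t u = (D^{[\ell]} g)\circ \varphi_t^{[\ell]}$. A routine induction then yields $\partial_t^n u(0,y) = \big((D^{[\ell]})^n g\big)(y)$, and a formal Taylor expansion of $u(t,y)$ about $t=0$ collects these derivatives into the exponential series $\exp(t\, D^{[\ell]})\, g(y)$, evaluated at $y = y_0$.

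To deduce the two-flow statement, I would set $y_1 := \varphi_s^{[1]}(y_0)$ and apply the single-flow identity with $\ell=2$ and $g$ taken as the coordinate functions, obtaining $\varphi_t^{[2]}(y_1) = \exp(t\, D^{[2]})\, y \big|_{y=y_1}$. Writing $h(y) := \exp(t\, D^{[2]})\, y$, now viewed as a smooth function of $y$ with $t$ a parameter, a second application of the single-flow identity with $\ell=1$ and $g = h$ gives
\[
h(\varphi_s^{[1]}(y_0)) \;=\; \exp(s\, D^{[1]})\, h \big|_{y_0} \;=\; \exp(s\, D^{[1]}) \exp(t\, D^{[2]})\, y_0,
\]
which is the claimed identity. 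The reversal of factor order between the flow composition $\varphi_t^{[2]} \circ \varphi_s^{[1]}$ and the operator product $\exp(s\, D^{[1]}) \exp(t\, D^{[2]})$ falls out automatically from this ``pull the inner flow out first'' procedure.

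The main obstacle is analytic rather than combinatorial: the Taylor series defining $\exp(t\, D^{[\ell]})$ need not converge for general smooth $g$, so strict equality requires analyticity of $F^{[\ell]}$ and $g$ on a suitable neighborhood, together with $|s|,|t|$ small enough. In the context of the present paper, however, Gr\"obner's lemma is used to compare Taylor coefficients in $\Delta t$ when deriving order conditions, so it suffices to interpret the identity as an equality of formal power series in $s$ and $t$; I would make this convention explicit to sidestep convergence issues entirely.
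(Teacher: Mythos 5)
The paper states this lemma without proof, citing Gr\"obner (1960); it is the classical Lie-series result (cf.\ Lemma III.5.1 of Hairer--Lubich--Wanner). Your argument is exactly the standard derivation --- establish $g(\varphi_t^{[\ell]}(y_0)) = \exp(tD^{[\ell]})g|_{y_0}$ by Taylor expansion and induction on $\partial_t^n$, then apply it twice to obtain the order reversal --- and it is correct, including the two caveats you flag (autonomization for the non-autonomous case, and reading $\exp(tD^{[\ell]})$ as a formal power series or assuming analyticity), both of which are the appropriate conventions for the order-condition analysis in which the lemma is used.
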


Applying Gr\"{o}bner's lemma repeatedly to composition
\cref{os_description}, we have
\begin{equation}
	\label{nsplit_exp}
	\Psi_{\Dt} (y_0)  =  \prod\limits_{k=1}^s \left(\prod\limits_{\ell = 1}^\Nop  \exp(\aaalpha{k}{\ell} \Dt \DOp{\ell} )\right) y_0,
\end{equation}
where with some abuse of notation, $\prod$ denotes composition.  In
general, the operators $\DOp{\ell}$ do not commute. Hence, the numerical
solution $\Psi_{\Dt}$ is different from the exact solution
$\varphi_{\Dt} = \exp(\Dt(\DOp{1} + \DOp{2} + \cdots + \DOp{\Nop}))$ of
$\displaystyle \dv{y}{t} = \mathcal{F}(t,y)$, and the order
conditions for a given method can be derived from the expansion of
the difference between the two.

%%%%%%%%%%%%%%%%%%%%%%%%%%%%%%%%%%%%%%%%%%%%%%%%%%%%%%%%%%%%%%%%%%%%%%%%%%%%%%%%%%%%
%%%%
%%%% Main results
%%%%
%%%%%%%%%%%%%%%%%%%%%%%%%%%%%%%%%%%%%%%%%%%%%%%%%%%%%%%%%%%%%%%%%%%%%%%%%%%%%%%%%%%%

\section{Main results}
\label{sec:complexLT_main}

Strictly speaking, Gr\"{o}bner's lemma is only stated for two
operators.  The relation \cref{nsplit_exp} suggests that a general
version of the Baker--Campbell--Hausdorff (BCH)
formula~\cite{hairer2006} is needed to derive order conditions for
$\Nop$-split operator-splitting methods. In \cref{subsec:BCH_N}, we
generalize the BCH formula to a product of $\Nop$ terms. In
\cref{subsec:order_cond_N}, we explicitly derive the order conditions
of an $s$-stage, $\Nop$-split operator-splitting method of up to order
two. In~\cref{subsec:deriveOSN}, we give the derivation of the new
two-stage, $\Nop$-split operator-splitting methods called
\emph{complex Lie--Trotter} (CLT) methods because they can be
interpreted as a composition of Lie--Trotter steps with complex
steps. These methods are second order and conjugates of each other. We
also propose and evaluate two third-order $\Nop$-split
operator-splitting methods based on composition as well as show how
even higher-order methods can be built in a similar fashion.

\subsection{The BCH formula for $\Nop$-split problems}
\label{subsec:BCH_N}

For non-commutative operators $X,Y$, it is well known that
$\exp(X)\exp(Y) \neq \exp(X+Y)$. The BCH formula proved that there
exists a $Z$ that can be expressed in terms of commutators of $X$ and
$Y$, such that
\begin{equation*}
	\exp(X) \exp(Y)= \exp(Z). 
\end{equation*}
In particular, the first few terms of $Z$ are
\begin{equation}
	\label{eq:BCH}
	Z = X+Y+\frac{1}{2}[X,Y] + \frac{1}{12} [X,[X,Y]] - \frac{1}{12}[Y,[X,Y]] + \cdots. 
\end{equation}
The original papers by Baker \cite{Baker1905}, Campbell
\cite{Campbell1897}, and Hausdorff \cite{Hausdorff1906} showed that
$Z$ can be written as nested commutators of $X$ and $Y$ but did not
give an explicit formula. In 1947, Dynkin \cite{dynkin1947} derived
an explicit formula for $Z$:
\begin{equation} \label{dynkin}
	Z = \sum\limits_{k=1}^\infty \sum\limits_{p_i,q_i}
	\frac{(-1)^{k-1}}{k} \frac{[X^{p_1}Y^{q_1} X^{p_2}Y^{q_2}\cdots
		X^{p_k}Y^{q_k}]}{\left(\sum\limits_{i=1}^k (p_i+q_i) \right)
		p_1! q_1! p_2! q_2! \cdots p_k! q_k!},
\end{equation} 
where the inner summation is taken over all non-negative integers
$p_i,q_i$ such that $p_i+q_i >0$ and
$[X^{p_1}Y^{q_1} X^{p_2}Y^{q_2}\cdots X^{p_k}Y^{q_k}] = \left[X^{p_1},
\left[Y^{q_1}, \left[X^{p_2}, \left[Y^{q_2}, \dots
\left[X^{p_{k-1}}, \left[Y^{q_{k-1}}, \left[X^{p_k},
Y^{q_k}\right]\right]\right] \dots
\right]\right]\right]\right]$ denotes the right-nested
commutator. However, formula \cref{dynkin} is difficult to use in
practice due to the complexity of the sum and the fact that the nested
commutators are not distinct according to the Jacobi identity
\begin{equation*}
	[X_1, [X_2, X_3]] + [X_2,[X_3,X_1]] + [X_3,[X_1,X_2]] = 0. 
\end{equation*}
Finding a succint expression for $Z$ is challenging. In 2009, Casas
and Murua~\cite{Casas2009} constructed an efficient algorithm to
generate an expression for $Z$.  We now generalize the BCH formula to
the case of $\Nop$ exponentials
$\exp(X_1), \exp(X_2),\dots, \exp(X_\Nop)$. In particular, we give the
first few terms in terms of nested commutators of
$X_1, X_2,\dots,X_{\Nop}$.

%\TODO{need to say something about how BCH is only for 2-split and give
%``original'' BCH formula}

% Expanding \cref{lem:Grobner} to $\Nop$ flows and repeatedly apply
% the Baker--Campbell--Hausdorff (BCH) formula, we derive the
% following proposition.

%we can write 
%\begin{align*}
%	(\varphi_{\alpha_\Nop}^{[\Nop]} \circ \cdots \circ \varphi_{\alpha_2}^{[2]} \circ \varphi_{\alpha_1}^{[1]})(y_0) & =  y_0  \exp(\alpha_1 \DOp{1})\exp(\alpha_2 \DOp{2})\cdots \exp(\alpha_\Nop \DOp{\Nop}) \\
%	& = y_0 \exp(E(\alpha_1, \dots, \alpha_\Nop)) 
%\end{align*}
%where 
%\begin{equation*}
%	\begin{aligned}
%		E(\alpha_1, \dots, \alpha_\Nop) = & \alpha_1 \DOp{1} + \alpha_2 \DOp{2} + \cdots + \alpha_\Nop \DOp{\Nop} 
%		%			+ \frac{st}{2}[\DOp{1}, \DOp{2}] + \frac{su}{2}[\DOp{1}, D_3] + \frac{tu}{2}[\DOp{2},D_3] \\
%		%			& + \frac{s^2t}{12}[\DOp{1},[\DOp{1},\DOp{2}]] 
%		%			+ \frac{s^2u}{12}[\DOp{1},[\DOp{1},D_3]] \\
%		%			& + \frac{t^2s}{12}[\DOp{2},[\DOp{2},\DOp{1}]]
%		%			+ \frac{t^2u}{12}[\DOp{2},[\DOp{2},D_3]] \\
%		%			& + \frac{u^2s}{12}[D_3,[D_3,\DOp{1}]]
%		%			+ \frac{u^2t}{12}[D_3,[D_3,\DOp{2}]] \\
%		%			& + \frac{stu}{6}[\DOp{1},[\DOp{2},D_3]]
%		%			- \frac{stu}{6}[D_3,[\DOp{1},\DOp{2}]] + \cdots .
%	\end{aligned}
%\end{equation*}
%The Lie bracket is calculated exactly as for matrices, i.e. $[\DOp{1},\DOp{2}] = \DOp{1}\DOp{2} - \DOp{2}\DOp{1}$. 

\begin{prop}{$\Nop$-term BCH formula} \label{prop:bch_n}{\ }
	
	For $N \geq 2$, let $\opp{1},\opp{2},\dots, \opp{N}$ be arbitrary
	(in general non-commuting) matrices. Then there exists a $Z_N(t)$
	such that the following relation holds:
	\begin{equation}
		\exp(t\opp{1}) \exp(t\opp{2}) \cdots \exp(t\opp{N}) = \exp(Z_N(t)).
	\end{equation}
	
	The first few terms of the Taylor coefficients of
	$Z_N(t) = t\bchcoeff{N}{1} + t^2 \bchcoeff{N}{2} + t^3
	\bchcoeff{N}{3} + \dots$ are given by
	\begin{equation*}\resizebox{\hsize}{!}{
			$\begin{aligned}
				\bchcoeff{N}{1} & = \sum\limits_{i=1}^N \opp{i},  \\
				\bchcoeff{N}{2} & = \frac{1}{2} \sum\limits_{i=1}^{N-1} \sum\limits_{j=i+1}^N 
				\LieTwo{\opp{i}}{\opp{j}}, \\ 
				\bchcoeff{N}{3} & = \frac{1}{12}\sum\limits_{i=1}^{N} \sum\limits_{j=1, j\neq i }^N 
				\LieThree{\opp{i}}{\opp{i}}{\opp{j}}+ \frac{1}{6}\sum\limits_{i=1}^{N-2} \sum\limits_{j=i+1}^{N-1} \sum\limits_{k=j+1}^N \left( \LieThree{\opp{i}}{\opp{j}}{\opp{k}}+ [[\opp{i},\opp{j}],\opp{k}]\right),
			\end{aligned}$}
	\end{equation*}
	where $\LieTwo{\opp{i}}{\opp{j}} = \opp{i}\opp{j} -
	\opp{j}\opp{i}$ is the Lie bracket.
\end{prop}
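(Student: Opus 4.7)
The plan is to prove existence and compute the first few Taylor coefficients of $Z_N(t)$ by induction on $N$, combined with the classical two-operator BCH formula \cref{eq:BCH}. For the base case $N=2$, the classical formula directly yields the claimed $\bchcoeff{2}{1}$, $\bchcoeff{2}{2}$, and $\bchcoeff{2}{3}$, after noting that $[\opp{2},[\opp{2},\opp{1}]] = -[\opp{2},[\opp{1},\opp{2}]]$ so that the third-order term matches the $j \neq i$ double sum in the statement. For the inductive step, assume $\exp(t\opp{1})\cdots\exp(t\opp{N-1}) = \exp(Z_{N-1}(t))$; then multiplying on the right by $\exp(t\opp{N})$ and applying the classical BCH once more to the two operators $Z_{N-1}(t)$ and $t\opp{N}$ (treated as formal power series in $t$) immediately produces a $Z_N(t)$ with the required exponential property.

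To extract explicit formulas for $\bchcoeff{N}{k}$, $k=1,2,3$, I would write the ansatz $Z_N(t) = \sum_{k\geq 1} t^k \bchcoeff{N}{k}$, expand both sides of $\exp(t\opp{1})\cdots\exp(t\opp{N}) = \exp(Z_N(t))$ in Taylor series, and match coefficients order by order. At order $t$, both sides give $\sum_i \opp{i}$ immediately. At order $t^2$, the left-hand side contributes $\tfrac{1}{2}\sum_i \opp{i}^{\,2} + \sum_{i<j}\opp{i}\opp{j}$, while the right-hand side contributes $\bchcoeff{N}{2} + \tfrac{1}{2}\bchcoeff{N}{1}^{\,2}$; expanding $\bchcoeff{N}{1}^{\,2}$ and subtracting yields $\bchcoeff{N}{2} = \tfrac{1}{2}\sum_{i<j}[\opp{i},\opp{j}]$.

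The main obstacle is order $t^3$. There, the left-hand side produces every ordered monomial $\opp{i}\opp{j}\opp{k}$ in exponential order, weighted by the appropriate factorials, while the right-hand side expands as $\bchcoeff{N}{3} + \tfrac{1}{2}(\bchcoeff{N}{1}\bchcoeff{N}{2} + \bchcoeff{N}{2}\bchcoeff{N}{1}) + \tfrac{1}{6}\bchcoeff{N}{1}^{\,3}$. Isolating $\bchcoeff{N}{3}$ requires splitting the residual sum by index pattern: the repeated-index case ($i=j\neq k$ and its permutations) must be reassembled into the family $\LieThree{\opp{i}}{\opp{i}}{\opp{j}}$ with coefficient $\tfrac{1}{12}$, while the three-distinct-index case must be reduced to the two structures $\LieThree{\opp{i}}{\opp{j}}{\opp{k}}$ and $[[\opp{i},\opp{j}],\opp{k}]$ with $i<j<k$, each with coefficient $\tfrac{1}{6}$. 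The bookkeeping is delicate because the Jacobi identity makes the nested-commutator basis on three distinct indices non-unique, so one must fix the canonical basis indicated in the statement before matching, and verify that the six orderings of $\opp{i}\opp{j}\opp{k}$ that remain after subtracting $\tfrac{1}{6}\bchcoeff{N}{1}^{\,3}$ and the cross terms indeed reduce to that basis with the stated coefficients.

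An appealing shortcut that avoids most of this brute-force bookkeeping is to perform the induction directly at the coefficient level: iterating the classical BCH from $Z_{N-1}(t)$ to $Z_N(t)$ only ever adjoins the new operator $t\opp{N}$, so at orders $\leq 3$ the new contributions to $\bchcoeff{N}{k}$ are obtained from $\bchcoeff{N-1}{k}$ and $\opp{N}$ alone. Under the inductive hypothesis, the single, double, and triple sums indexed over $\{1,\dots,N-1\}$ pick up precisely the new index $N$ in the correct positions, reproducing the claimed formulas with one fewer layer of combinatorial verification required.
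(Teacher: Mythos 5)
Your concluding ``shortcut'' is precisely the paper's proof: existence follows by repeated application of the classical two-term BCH formula, and the coefficients are obtained by induction on $N$, writing $Z_N(t) = \log\left[\exp\left(t\,\tfrac{Z_{N-1}(t)}{t}\right)\exp(t\opp{N})\right]$, expanding via \cref{eq:BCH}, and collecting powers of $t$. The orders $t$ and $t^2$ go through exactly as you describe. The gap is at order $t^3$, where your claim that the sums ``pick up precisely the new index $N$ in the correct positions \dots with one fewer layer of combinatorial verification required'' is too optimistic. The new contributions at that order are $\tfrac{1}{2}\LieTwo{\bchcoeff{N-1}{2}}{\opp{N}} = \tfrac{1}{4}\sum_{i<j}\LieTwo{[\opp{i},\opp{j}]}{\opp{N}}$ together with $\tfrac{1}{12}\LieThree{\bchcoeff{N-1}{1}}{\bchcoeff{N-1}{1}}{\opp{N}} + \tfrac{1}{12}\LieThree{\opp{N}}{\opp{N}}{\bchcoeff{N-1}{1}}$, and the raw coefficients $\tfrac{1}{4}$ and $\tfrac{1}{12}$ do not match the target coefficient $\tfrac{1}{6}$ on either $\LieTwo{[\opp{i},\opp{j}]}{\opp{N}}$ or $\LieThree{\opp{i}}{\opp{j}}{\opp{N}}$. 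One must expand the double sum $\tfrac{1}{12}\sum_{i,j\le N-1}\LieThree{\opp{i}}{\opp{j}}{\opp{N}}$, separate the diagonal $i=j$ terms (which supply the $\tfrac{1}{12}\LieThree{\opp{i}}{\opp{i}}{\opp{N}}$ family), and symmetrize the off-diagonal part using the Jacobi identity in the form $\LieThree{\opp{j}}{\opp{i}}{\opp{N}} = \LieThree{\opp{i}}{\opp{j}}{\opp{N}} - \LieTwo{[\opp{i},\opp{j}]}{\opp{N}}$; only then do the arithmetic identities $\tfrac{1}{12}\cdot 2 = \tfrac{1}{6}$ and $\tfrac{1}{4}-\tfrac{1}{12} = \tfrac{1}{6}$ produce the stated formula for $\bchcoeff{N}{3}$. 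This recombination is the bulk of the paper's proof, so while your outline is correct and your earlier remark about the Jacobi identity shows you see the issue, the step you defer is the step that actually needs doing; your alternative route of direct Taylor matching would face the same bookkeeping.
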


\begin{proof}
	First, the existence of $Z_N(t)$ is guaranteed by construction
	through repeated application of the original BCH
	formula~\cref{eq:BCH}. To find the first few terms of $Z_N(t)$, we
	proceed by induction.
	
	The base case with $N = 2$ coincides with the original BCH
	formula~\cref{eq:BCH}, which states
	\begin{align*}
		Z_2(t) & = \log(\exp(t\opp{1}) \exp(t\opp{2}) )  \\
		& = t(\opp{1} + \opp{2}) + t^2 \frac{1}{2} [\opp{1}, \opp{2}]  
		+ t^3 \frac{1}{12} (\LieThree{\opp{1}}{\opp{1}}{\opp{2}} + \LieThree{\opp{2}}{\opp{2}}{\opp{1}}) + \bigOh(t^4). 
	\end{align*}
	
	We now assume that
	$Z_{N-1}(t) = t\bchcoeff{N-1}{1} + t^2 \bchcoeff{N-1}{2} + t^3
	\bchcoeff{N-1}{3} + \bigOh(t^4)$, where
	\begin{equation*}\resizebox{\hsize}{!}{
			$\begin{aligned}
				\bchcoeff{N-1}{1} & = \sum\limits_{i=1}^{N-1} \opp{i}, \\
				\bchcoeff{N-1}{2} & = \frac{1}{2} \sum\limits_{i=1}^{N-2} \sum\limits_{j=i+1}^{N-1} 
				[\opp{i},\opp{j}], \\ 
				\bchcoeff{N-1}{3} & =  \frac{1}{12}\sum\limits_{i=1}^{N-1} \sum\limits_{j=1, j\neq i }^{N-1} 
				[\opp{i},[\opp{i},\opp{j}]] + \frac{1}{6}\sum\limits_{i=1}^{N-3} \sum\limits_{j=i+1}^{N-2} \sum\limits_{k=j+1}^{N-1} \left( [\opp{i},[\opp{j},\opp{k}]] + [[\opp{i},\opp{j}],\opp{k}]\right).
			\end{aligned}$}
	\end{equation*}

	We now compute $Z_{N}(t)$:
	
	\begin{equation}  \label{eq:ZN}
		\begin{aligned}
			Z_{N}(t) & = \log\left[ \exp(t\opp{1}) \exp(t\opp{2}) \cdots \exp(t\opp{N}) \right] & \\
			& = \log\left[\exp(t \frac{Z_{N-1}(t)}{t}) \exp(t\opp{N}) \right]  \\
			& = t \left(\frac{Z_{N-1}(t)}{t} + \opp{N}\right) +\frac{1}{2} t^2 \left[\frac{Z_{N-1}(t)}{t}, \opp{N}\right] \\
			& \quad + \frac{1}{12}t^3 \left(\left[\frac{Z_{N-1}(t)}{t} ,
			\left[\frac{Z_{N-1}(t)}{t},\opp{N} \right]\right] + \left[\opp{N},
			\left[\opp{N},\frac{Z_{N-1}(t)}{t}\right]\right] \right) + \bigOh(t^4).
		\end{aligned}
	\end{equation}
	
	Inserting the expression for $Z_{N-1}(t)$ into \cref{eq:ZN} and
	collecting the coefficients of $t, t^2,$ and $t^3$ yields
	
	\begin{align*}
		\bchcoeff{N}{1} & = \bchcoeff{N-1}{1} + \opp{N} = \sum\limits_{i=1}^{N-1} \opp{i} + \opp{N} = \sum\limits_{i=1}^{N} \opp{i}. 
	\end{align*}
	
	\begin{align*}
		\bchcoeff{N}{2} & = \bchcoeff{N-1}{2} + \frac{1}{2} [\bchcoeff{N-1}{1}, \opp{N}] \\
		& = \frac{1}{2} \sum\limits_{i=1}^{N-2} \sum\limits_{j=i+1}^{N-1} 
		[\opp{i},\opp{j}]  + \frac{1}{2}\left[\sum\limits_{i=1}^{N-1} \opp{i}, \opp{N}\right] \\
		& =  \frac{1}{2} \sum\limits_{i=1}^{N-1} \sum\limits_{j=i+1}^{N} 
		[\opp{i},\opp{j}].  
	\end{align*}
	
	\begin{equation*}\resizebox{\hsize}{!}{
			$\begin{aligned}
				\bchcoeff{N}{3}	& = \bchcoeff{N-1}{3} + \frac{1}{2}\LieTwo{\bchcoeff{N-1}{2}}{\opp{N}} + \frac{1}{12} \left( \LieThree{\bchcoeff{N-1}{1}}{\bchcoeff{N-1}{1}}{\opp{N}} + \LieThree{\opp{N}}{\opp{N}}{\bchcoeff{N-1}{1}} \right)	\\
				& =  \frac{1}{12}\sum\limits_{i=1}^{N-1} \sum\limits_{j=1, j\neq i }^{N-1} 
				[\opp{i},[\opp{i},\opp{j}]] + \frac{1}{6}\sum\limits_{i=1}^{N-3} \sum\limits_{j=i+1}^{N-2} \sum\limits_{k=j+1}^{N-1} \left( [\opp{i},[\opp{j},\opp{k}]] + [[\opp{i},\opp{j}],\opp{k}]\right) \\
				& \quad +  \frac{1}{2} \LieTwo{ \frac{1}{2} \sum\limits_{i=1}^{N-2} \sum\limits_{j=i+1}^{N-1} 
					[\opp{i},\opp{j}]}{\opp{N}} + \frac{1}{12} 
				\left( \LieThree{ \sum\limits_{i=1}^{N-1} \opp{i}}{ \sum\limits_{j=1}^{N-1} \opp{j} }{\opp{N}} +  \LieThree{\opp{N}}{\opp{N}}{  \sum\limits_{i=1}^{N-1} \opp{i} } 
				\right) \\ 
				& = \frac{1}{12} \sum\limits_{i=1}^{N} \sum\limits_{j=1, j\neq i }^{N} 
				[\opp{i},[\opp{i},\opp{j}]] + \frac{1}{6}\sum\limits_{i=1}^{N-3} \sum\limits_{j=i+1}^{N-2} \sum\limits_{k=j+1}^{N-1} \left( [\opp{i},[\opp{j},\opp{k}]] + [[\opp{i},\opp{j}],\opp{k}]\right) \\
				& \quad + \frac{1}{6} \sum\limits_{i=1}^{N-2} \sum\limits_{j=i+1}^{N-1}  \LieTwo{ 
					[\opp{i},\opp{j}]}{\opp{N}} +  \frac{1}{12} \sum\limits_{i=1}^{N-2} \sum\limits_{j=i+1}^{N-1} \LieTwo{ 
					[\opp{i},\opp{j}]}{\opp{N}}  +\frac{1}{12}  \sum\limits_{i=1}^{N-1}  \sum\limits_{j=1, j
					\neq i}^{N-1}  \LieThree{  \opp{i}}{ \opp{j} }{\opp{N}} \\ 
				& = \frac{1}{12} \sum\limits_{i=1}^{N} \sum\limits_{j=1, j\neq i }^{N} 
				[\opp{i},[\opp{i},\opp{j}]] + \frac{1}{6}\sum\limits_{i=1}^{N-3} \sum\limits_{j=i+1}^{N-2} \sum\limits_{k=j+1}^{N-1} \left( [\opp{i},[\opp{j},\opp{k}]] + [[\opp{i},\opp{j}],\opp{k}]\right) \\
				& \quad + \frac{1}{6} \sum\limits_{i=1}^{N-2} \sum\limits_{j=i+1}^{N-1}  \LieTwo{ 
					[\opp{i},\opp{j}]}{\opp{N}}   +\frac{1}{6} \sum\limits_{i=1}^{N-2}  \sum\limits_{j=i+1}^{N-1}  \LieThree{  \opp{i}}{ \opp{j} }{\opp{N}} - \frac{1}{12}  \sum\limits_{i=1}^{N-2}  \sum\limits_{j=i+1}^{N-1}  \LieThree{  \opp{i}}{ \opp{j} }{\opp{N}} \\
				& \quad + \frac{1}{12} \sum\limits_{i=1}^{N-2} \sum\limits_{j=i+1}^{N-1} \LieTwo{ 
					[\opp{i},\opp{j}]}{\opp{N}}   
				+ \frac{1}{12} \sum\limits_{i=1}^{N-2}  \sum\limits_{j=1}^{i-1}  \LieThree{  \opp{i}}{ \opp{j} }{\opp{N}} + \frac{1}{12} \sum\limits_{i=N-1}  \sum\limits_{j=1}^{N-2}  \LieThree{  \opp{i}}{ \opp{j} }{\opp{N}}\\ 
				& = \frac{1}{12} \sum\limits_{i=1}^{N} \sum\limits_{j=1, j\neq i }^{N} 
				[\opp{i},[\opp{i},\opp{j}]] + \frac{1}{6}\sum\limits_{i=1}^{N-2} \sum\limits_{j=i+1}^{N-1} \sum\limits_{k=j+1}^{N} \left( [\opp{i},[\opp{j},\opp{k}]] + [[\opp{i},\opp{j}],\opp{k}]\right) \\
				& \quad +\frac{1}{12} \sum\limits_{i=1}^{N-2} \sum\limits_{j=i+1}^{N-1} \left( - \LieThree{  \opp{i}}{ \opp{j} }{\opp{N}} + \LieTwo{ 
					[\opp{i},\opp{j}]}{\opp{N}}   +   \LieThree{\opp{j}}{ \opp{i} }{\opp{N}}\right) \\ 
				& = \frac{1}{12} \sum\limits_{i=1}^{N} \sum\limits_{j=1, j\neq i }^{N} 
				[\opp{i},[\opp{i},\opp{j}]] + \frac{1}{6}\sum\limits_{i=1}^{N-2} \sum\limits_{j=i+1}^{N-1} \sum\limits_{k=j+1}^{N} \left( [\opp{i},[\opp{j},\opp{k}]] + [[\opp{i},\opp{j}],\opp{k}]\right).
			\end{aligned}$}
	\end{equation*}
	
	Hence, we obtain the desired result.
\end{proof}

\subsection{Order conditions via the generalized BCH formula}
\label{subsec:order_cond_N}

In this section, we derive the order conditions of $s$-stage,
$\Nop$-split operator-splitting methods up to order $2$. We apply the
generalized BCH formula in \cref{prop:bch_n} to \cref{nsplit_exp} to
get one exponential series in powers of $\Dt$. Then we compare this
series with the exact solution $\exp(\Dt(\DOp{1}+\DOp{2} +\cdots+\DOp{\Nop}))$.

Stage $k$ of the $\Nop$-split operator-splitting method \cref{nsplit_exp}
is a composition of the form
\begin{equation}
	\vphi{\aaalpha{k}{\Nop}\Dt}{\Nop} \circ
	% \vphi{\aaalpha{k}{\Nop-1}\Dt}{\Nop-1} \circ
	\cdots \circ \vphi{\aaalpha{k}{1}\Dt}{1} = \exp(\aaalpha{k}{1}\Dt \DOp{1})  \exp(\aaalpha{k}{2}\Dt \DOp{2}) \cdots \exp(\aaalpha{k}{\Nop}\Dt \DOp{\Nop}).   
\end{equation}
Applying \cref{prop:bch_n}, we obtain
\begin{equation}\label{stagek_exp}\resizebox{\hsize}{!}{
		$\begin{aligned}
			\vphi{\aaalpha{k}{\Nop}\Dt}{\Nop} \circ \cdots \circ \vphi{\aaalpha{k}{1}\Dt}{1}  = \exp & \left(  
			\Dt \sum\limits_{\ell=1}^{\Nop} \aaalpha{k}{\ell} \DOp{\ell}  + \frac{1}{2} \Dt^2\sum\limits_{\ell_1=1}^{\Nop-1} \sum\limits_{\ell_2=\ell_1+1}^{\Nop} \aaalpha{k}{\ell_1} \aaalpha{k}{\ell_2} \LieTwo{\DOp{\ell_1}}{\DOp{\ell_2}} \right. \\ 
			& \quad + \frac{1}{12} \Dt^3 \sum\limits_{\ell_1=1}^{\Nop} \sum\limits_{\ell_2=1, \ell_2\neq \ell_1 }^{\Nop}  (\aaalpha{k}{\ell_1})^2 \aaalpha{k}{\ell_2} \LieThree{\DOp{\ell_1}}{\DOp{\ell_1}}{\DOp{\ell_2}} \\ 
			& \quad + \frac{1}{6} \Dt^3   \sum\limits_{\ell_1=1}^{\Nop-2} \sum\limits_{\ell_2=\ell_1+1}^{\Nop-1} \sum\limits_{\ell_3=\ell_2+1}^{\Nop}   \aaalpha{k}{\ell_1} \aaalpha{k}{\ell_2} \aaalpha{k}{\ell_3} \LieThree{\DOp{\ell_1}}{\DOp{\ell_2}}{\DOp{\ell_3}} \\ 
			& \quad \left. + \frac{1}{6} \Dt^3   \sum\limits_{\ell_1=1}^{\Nop-2} \sum\limits_{\ell_2=\ell_1+1}^{\Nop-1} \sum\limits_{\ell_3=\ell_2+1}^{\Nop}   \aaalpha{k}{\ell_1} \aaalpha{k}{\ell_2} \aaalpha{k}{\ell_3} \LieTwo{[\DOp{\ell_1},\DOp{\ell_2}]}{\DOp{\ell_3}} + \bigOh(\Dt^4) \right). \\ 
		\end{aligned}$ } 
\end{equation}
Let $\Psi^{(k)}_{\Dt}$ be the numerical solution after stage $k$. $\Psi^{(k)}_{\Dt}$ is defined recursively by 
\begin{equation} \label{eq:psi_recursive} \Psi^{(0)}_{\Dt} = \text{Id}, \:
	\Psi^{(k)}_{\Dt} = \vphi{\aaalpha{k}{\Nop}\Dt}{\Nop} \circ \cdots \circ
	\vphi{\aaalpha{k}{1}\Dt}{1} \Psi^{(k-1)}_{\Dt},
\end{equation}
such that $\Psi^{(s)}_{\Dt} = \Psi_{\Dt}$ as defined in
\cref{os_description}. We aim to write $\Psi^{(k)}_{\Dt}$ as an exponential
operator.
\begin{lem} 
	\label{lem:recursive_c}
	The method $\Psi^{(k)}_{\Dt}$ defined in \cref{eq:psi_recursive} can be written as 
	\begin{equation}\label{psik_exp}\resizebox{\hsize}{!}{
			$\begin{aligned}
				\Psi^{(k)}_{\Dt} = \exp & \left(\Dt \sum\limits_{\ell=1}^{\Nop} c_{\ell,k}^1 \DOp{\ell} +  \Dt^2\sum\limits_{\ell_1=1}^{\Nop-1} \sum\limits_{\ell_2=\ell_1+1}^{\Nop} c_{\ell_1,\ell_2,k}^2 \LieTwo{\DOp{\ell_1}}{\DOp{\ell_2}} + \bigOh(\Dt^3) \right) \text{Id}
				% \Dt^3 \sum\limits_{\ell_1=1}^{\Nop} \sum\limits_{\ell_2=1, \ell_2\neq \ell_1 }^{\Nop}   c_{\ell_1,\ell_2,k}^3 \LieThree{\DOp{\ell_1}}{\DOp{\ell_1}}{\DOp{\ell_2}}   \\ 
				%& \left. + \Dt^3   \sum\limits_{\ell_1=1}^{\Nop-2} \sum\limits_{\ell_2=\ell_1+1}^{\Nop-1} \sum\limits_{\ell_3=\ell_2+1}^{\Nop}  c_{\ell_1,\ell_2, \ell_3,k}^3 \left(\LieThree{\DOp{\ell_1}}{\DOp{\ell_2}}{\DOp{\ell_3}} + \LieTwo{[\DOp{\ell_1},\DOp{\ell_2}]}{\DOp{\ell_3}}  \right) + \bigOh(\Dt^4) \right) \text{Id}, 
			\end{aligned}$ }
	\end{equation}  %\TODO{Fix 3.6}
	where all coefficients are zero for $k=0$ and 
	\begin{align}
		c_{\ell, k}^1 & = c_{\ell, k-1}^1 + \aaalpha{k}{\ell}, \\ 
		c_{\ell_1,\ell_2,k}^2 & = c_{\ell_1,\ell_2,k-1}^2  + \frac{1}{2} \aaalpha{k}{\ell_1} \aaalpha{k}{\ell_2}  + \frac{1}{2} \aaalpha{k}{\ell_1} c_{\ell_2,k-1}^1 - \frac{1}{2} c_{\ell_1,k-1}^1\aaalpha{k}{\ell_2}, %\\
		%		c_{\ell_1,\ell_2,k}^3 & =
		%		 		c_{\ell_1,\ell_2,k-1}^3 + \frac{1}{12} (\aaalpha{k}{\ell_1})^2 \aaalpha{k}{\ell_2} + \frac{1}{2} \aaalpha{k}{\ell_1} c_{\ell_1,\ell_2,k-1}^2 - \frac{1}{3} \aaalpha{k}{\ell_1} \aaalpha{k}{\ell_2} c_{\ell_1,k-1}^1  \\
		%		 		&+\frac{1}{12}(\aaalpha{k}{\ell_1})^2 c_{\ell_2,k-1}^1 + \frac{1}{12} (c_{\ell_1,k-1}^1)^2 \aaalpha{k}{\ell_2}  - \frac{1}{12} c_{\ell_1,k-1}^1 c_{\ell_2,k-1}^1 \aaalpha{k}{\ell_1} & \text{ for } \ell_1 < \ell_2 \\
		%		c_{\ell_1,\ell_2,k}^3 & =	
		%		 		c_{\ell_1,\ell_2,k-1}^3 + \frac{1}{12} (\aaalpha{k}{\ell_1})^2 \aaalpha{k}{\ell_2} - \frac{1}{2} \aaalpha{k}{\ell_1} c_{\ell_1,\ell_2,k-1}^2 + \frac{1}{6} \aaalpha{k}{\ell_1} \aaalpha{k}{\ell_2} c_{\ell_1,k-1}^1  \\
		%		 		&+\frac{1}{12}(\aaalpha{k}{\ell_1})^2 c_{\ell_2,k-1}^1 + \frac{1}{12} (c_{\ell_1,k-1}^1)^2 \aaalpha{k}{\ell_2}  - \frac{1}{12} c_{\ell_1,k-1}^1 c_{\ell_2,k-1}^1 \aaalpha{k}{\ell_1}
		%		 		 & \text{ for } \ell_1 > \ell_2 \\ 
		%		c_{\ell_1,\ell_2, \ell_3,k}^3 & =  
	\end{align}
	for $k=1,2,\dots$.
	
\end{lem}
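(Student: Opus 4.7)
The strategy is induction on the stage index $k$, using the two-term BCH formula (i.e., \cref{prop:bch_n} specialized to $N=2$, or equivalently \cref{eq:BCH}) to combine the exponential representation of the previous partial composition $\Psi^{(k-1)}_{\Dt}$ with the exponential form of stage $k$ given in \cref{stagek_exp}. For the base case $k=0$, the identity map $\Psi^{(0)}_{\Dt} = \text{Id} = \exp(0)$ corresponds to all coefficients $c_{\ell,0}^1$ and $c_{\ell_1,\ell_2,0}^2$ being zero, and the claimed recursions then read $c_{\ell,1}^1 = \aaalpha{1}{\ell}$ and $c_{\ell_1,\ell_2,1}^2 = \tfrac12\aaalpha{1}{\ell_1}\aaalpha{1}{\ell_2}$, which match the $k=1$ case of \cref{stagek_exp}.

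For the inductive step, assume the statement for $k-1$ and write $\Psi^{(k-1)}_{\Dt} = \exp(A_{k-1})$ with
\[
A_{k-1} = \Dt \sum_{\ell=1}^{\Nop} c_{\ell,k-1}^1 \DOp{\ell} + \Dt^2 \sum_{\ell_1<\ell_2} c_{\ell_1,\ell_2,k-1}^2 \LieTwo{\DOp{\ell_1}}{\DOp{\ell_2}} + \bigOh(\Dt^3),
\]
and $\vphi{\aaalpha{k}{\Nop}\Dt}{\Nop}\circ\cdots\circ\vphi{\aaalpha{k}{1}\Dt}{1} = \exp(B_k)$ with $B_k$ given by \cref{stagek_exp}. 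Applying the two-term BCH formula to the product $\exp(B_k)\exp(A_{k-1})$ yields
\[
\Psi^{(k)}_{\Dt} = \exp\!\Bigl(B_k + A_{k-1} + \tfrac{1}{2}[B_k, A_{k-1}] + \bigOh(\Dt^3)\Bigr).
\]
The coefficient of $\Dt\,\DOp{\ell}$ comes only from the $B_k + A_{k-1}$ part and gives immediately $c_{\ell,k}^1 = c_{\ell,k-1}^1 + \aaalpha{k}{\ell}$, as claimed.

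For the $\Dt^2$ coefficient of $\LieTwo{\DOp{\ell_1}}{\DOp{\ell_2}}$ with $\ell_1<\ell_2$, three contributions appear: the $\Dt^2$ term of $B_k$ (giving $\tfrac12\aaalpha{k}{\ell_1}\aaalpha{k}{\ell_2}$ from \cref{stagek_exp}), the $\Dt^2$ term of $A_{k-1}$ (giving $c_{\ell_1,\ell_2,k-1}^2$), and the commutator $\tfrac12[B_k,A_{k-1}]$, in which only the linear-in-$\Dt$ parts of $B_k$ and $A_{k-1}$ contribute at this order. The latter yields
\[
\tfrac{1}{2}\Dt^2 \sum_{\ell,\ell'=1}^{\Nop} \aaalpha{k}{\ell}\, c_{\ell',k-1}^1 \LieTwo{\DOp{\ell}}{\DOp{\ell'}}.
\]
The central bookkeeping step, and the one requiring the most care, is to reduce this unrestricted double sum to one over $\ell_1<\ell_2$ using the antisymmetry $\LieTwo{\DOp{\ell'}}{\DOp{\ell}} = -\LieTwo{\DOp{\ell}}{\DOp{\ell'}}$ (the diagonal $\ell=\ell'$ vanishes). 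Splitting the sum according to whether $\ell<\ell'$ or $\ell>\ell'$ and relabeling the latter produces the symmetrized contribution $\tfrac12\bigl(\aaalpha{k}{\ell_1} c_{\ell_2,k-1}^1 - \aaalpha{k}{\ell_2} c_{\ell_1,k-1}^1\bigr)$ for $\ell_1<\ell_2$. Adding the three contributions gives exactly
\[
c_{\ell_1,\ell_2,k}^2 = c_{\ell_1,\ell_2,k-1}^2 + \tfrac{1}{2}\aaalpha{k}{\ell_1}\aaalpha{k}{\ell_2} + \tfrac{1}{2}\aaalpha{k}{\ell_1} c_{\ell_2,k-1}^1 - \tfrac{1}{2} c_{\ell_1,k-1}^1 \aaalpha{k}{\ell_2},
\]
as claimed, with the remaining $\bigOh(\Dt^3)$ absorbed into the error term in \cref{psik_exp}. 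The only genuinely non-routine part is the sign-tracking in the symmetrization of the commutator double sum; everything else is Taylor-expansion bookkeeping.
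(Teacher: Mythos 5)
Your proof is correct and follows essentially the same route as the paper's: both apply the two-term BCH formula to the product of the stage-$k$ exponential (placed on the left, matching the paper's own convention $\exp(A)\exp(B)$) with the accumulated exponential for stages $1$ through $k-1$, and then read off the coefficients of $\DOp{\ell}$ and of $\LieTwo{\DOp{\ell_1}}{\DOp{\ell_2}}$. The only difference is presentational: you make the induction and the antisymmetry-based reduction of the unrestricted double sum to the range $\ell_1<\ell_2$ explicit, whereas the paper simply lists the three contributions without writing out that symmetrization.
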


\begin{proof}
	$\Psi^{(k)}_{\Dt} = \exp(A) \exp(B)$ where $A$ is the argument of the
	exponential form in \cref{stagek_exp} and $B$ is the argument of the
	exponential form in \cref{psik_exp} with $k$ substituted by $k-1$.
	
	The coefficients of $\DOp{\ell}$ in $\Psi^{(k)}_{\Dt}$ can only have contributions
	from the coefficients of $\DOp{\ell}$ in $A$ and $B$. Hence,
	\begin{equation*}
		c_{\ell, k}^1  = c_{\ell, k-1}^1 + \aaalpha{k}{\ell} .
	\end{equation*}

	The coefficients of $\LieTwo{\DOp{\ell_1}}{\DOp{\ell_2}}$ in
	$\Psi^{(k)}_{\Dt}$ come from the coefficients of
	$\LieTwo{\DOp{\ell_1}}{\DOp{\ell_2}}$ in $A$ and $B$ and the term
	$\displaystyle \frac{1}{2}[A,B]$ in the product
	$\exp(A)\exp(B)$. Therefore,
	\begin{equation*}
		c_{\ell_1,\ell_2,k}^2  = c_{\ell_1,\ell_2,k-1}^2  + \frac{1}{2} \aaalpha{k}{\ell_1} \aaalpha{k}{\ell_2} +  \frac{1}{2} \aaalpha{k}{\ell_1} c_{\ell_2,k-1}^1 - \frac{1}{2} c_{\ell_1,k-1}^1\aaalpha{k}{\ell_2}.
	\end{equation*}
	
	%%%%%%%%%%%%%%%%%%%%%%%%%%%%%%%%%%%%%%%%%%%%%%%%%%%%%%%%%%%%%%%%%%%%%%%%%%%%%%%%%%%%%%%%%
	%\todo{order 3 is unfinished}	
	%	The coefficients of $\LieThree{\DOp{\ell_1}}{\DOp{\ell_1}}{D_{\ell_2}}$, $\LieThree{D_{\ell_1}}{D_{\ell_2}}{D_{\ell_3}}$, and $\LieTwo{[D_{\ell_1},D_{\ell_2}]}{D_{\ell_3}}$ in $\Psi^{(k)}$ comes from the coefficients of of the corresponding commutators in $A$ and $B$ and the commutators $\displaystyle \frac{1}{2}[A,B]$, $\displaystyle \frac{1}{12}[A,[A,B]]$, and  $\displaystyle \frac{1}{12}[B,[B,A]]$ in the product $\exp(A)\exp(B)$. 
	%	
	%	For coefficients of $\LieThree{D_{\ell_1}}{D_{\ell_1}}{D_{\ell_2}}$ with $\ell_1 < \ell_2$: 
	%	 
	%	\begin{align*}
	%		c_{\ell_1,\ell_2,k}^3 = & c_{\ell_1,\ell_2,k-1}^3 + \frac{1}{12}(\aaalpha{k}{\ell_1})^2\aaalpha{k}{\ell_2} + \frac{1}{2} \aaalpha{k}{\ell_1} c_{\ell_1,\ell_2,k-1}^2- \frac{1}{2} \aaalpha{k}{\ell_1} \aaalpha{k}{\ell_2} c_{\ell_1,k-1}^1 \\
	%		& \quad + \frac{1}{12}(\aaalpha{k}{\ell_1})^2  c_{\ell_2,k-1}^1  + \frac{1}{12} (c_{\ell_1,k-1}^1)^2 \aaalpha{k}{\ell_2}
	%	\end{align*}
	%
	%{\large\textcolor{red}{Order 3 TBA}}
\end{proof}

\begin{rmk} \label{rmk:order_cond} Similar to Theorem 5.6 in
	\cite{hairer2006}, the operator-splitting method
	\cref{os_description} is of order $2$ if
	\begin{align*}
		c_{\ell, s}^1  &= 1, && \text{ for } \ell = 1,2,\dots, \Nop, \\ 
		c_{\ell_1,\ell_2,s}^2 &= 0, && \text{ for } \ell_1, \ell_2 = 1,2,\dots, \Nop \text{ and } \ell_1 \neq \ell_2, 
	\end{align*}
	where the coefficients $c_{\ell, s}$ and $c_{\ell_1,\ell_2,s}^2$
	are those defined in \cref{lem:recursive_c}. It can be shown that
	\begin{align*}
		c_{\ell, s}^1 & = \sum\limits_{k=1}^s \aaalpha{k}{\ell},  \\ 
		c_{\ell_1,\ell_2, s}^2 & = \frac{1}{2} \sum\limits_{k=1}^s \left[\aaalpha{k}{\ell_1}\left( \sum\limits_{j=1}^k \aaalpha{j}{\ell_2} \right) -\aaalpha{k}{\ell_2} \left( \sum\limits_{j=1}^{k-1} \aaalpha{j}{\ell_1} \right) \right] . 
	\end{align*}	
	The order conditions are thus 
	\begin{subequations}
		\begin{align*} 
			&p = 1: && \displaystyle \sum\limits_{k=1}^s
			\osalpha{k}{\ell}= 1,&& \text{for } \ell = 1,
			2, \dots, \Nop. \\ 
			&p = 2: && \displaystyle \sum\limits_{k=2}^{s}
			\osalpha{k}{\ell_1}
			\left(\sum\limits_{j=1}^{k-1}
			\osalpha{j}{\ell_2} \right)= \frac{1}{2},&&
			\text{for } \ell_1 = 1, 2,\dots, \Nop-1, \ \ell_2 = \ell_1+1, \ell_1+2, \dots, \Nop. 
			% && \sum\limits_{i=1}^m a_i \left(\sum\limits_{j=1}^{i-1} c_j \right)= \frac{1}{2},   &&  \sum\limits_{i=1}^m b_i \left(\sum\limits_{j=1}^{i-1} c_j \right)= \frac{1}{2},    
		\end{align*}
	\end{subequations}
	There are $\Nop$ conditions at order 1 and
	$\displaystyle \frac{\Nop(\Nop-1)}{2}$ conditions at order 2 for a
	total of $\displaystyle \frac{\Nop(\Nop+1)}{2}$ conditions.
\end{rmk}

\begin{rmk}
	Order conditions for $p\geq 3$ can be obtained by matching the
	coefficients of the nested commutators to those of the exact
	solution. The complexity of these expressions increases dramatically as
	$p$ increases with arbitrary $\Nop$.  These order conditions are
	unwieldy to display explicitly, but they can be generated using
	computer algebra. Auzinger et
	al.~\cite{auzinger2016practical} have produced a MAPLE program that
	constructs order conditions for $s$-stage, $3$-split problems up to
	order $6$. The program requires manual input of the Lyndon words
	that represent the nested commutators of three operators.
\end{rmk}

\subsection{Derivation of $\Nop$-split operator-splitting methods}
\label{subsec:deriveOSN}

We now discuss the derivation of $\Nop$-split operator-splitting
methods for arbitrary positive integer $\Nop$.  As discussed in
\cite{spiteri2023_3split}, fewer sub-integrations generally lead to
more efficient methods. Therefore, we are interested in methods with a
low number of stages. The only one-stage, $\Nop$-split method of order
one is the Lie--Trotter method. It is not possible to achieve second
order with a one-stage method. There are, however, second-order
methods with two stages.

\begin{thm}\label{thm:OSNNS2P2}
	The only two-stage, second-order, $\Nop$-split operator-splitting
	methods for $\Nop > 2$ have coefficients given
	in~\cref{tab:OSNNS2P2}.
	\begin{table}[htbp]
		\centering
		\renewcommand{\arraystretch}{2.5} 
		\begin{tabular}{|c|c|c|}
			\hline 
			$k$ & $\displaystyle \{\alpha_k^{[\ell]}\}_{\ell=1, 2,\dots, \Nop}$   \\
			\hline 
			1 & $\displaystyle \frac{1}{2} \pm \frac{i}{2}$ \\
			\hline 
			2 &  $\displaystyle \frac{1}{2} \mp \frac{i}{2}$  \\
			\hline 
		\end{tabular}
		\caption{Coefficients $\osalpha{k}{\ell}$ for the two-stage,
			second-order, $\Nop$-split methods. }
		\label{tab:OSNNS2P2}
	\end{table}
\end{thm}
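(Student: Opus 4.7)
The plan is to invoke the order conditions summarized in \cref{rmk:order_cond} and reduce the problem to a small algebraic system that can be solved in closed form.

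First, I would specialize the order conditions to $s=2$. Setting $a_\ell := \osalpha{1}{\ell}$ and $b_\ell := \osalpha{2}{\ell}$, the order-one conditions read $a_\ell + b_\ell = 1$ for every $\ell = 1, 2, \dots, \Nop$, so $b_\ell = 1 - a_\ell$. The order-two sum collapses, with $s=2$, to the single inner term $k = 2$, giving
\begin{equation*}
	b_{\ell_1}\, a_{\ell_2} = (1 - a_{\ell_1})\, a_{\ell_2} = \tfrac{1}{2}, \qquad 1 \le \ell_1 < \ell_2 \le \Nop.
\end{equation*}
This replaces the original problem with a system of $\binom{\Nop}{2}$ quadratic equations in the unknowns $a_1, \dots, a_{\Nop}$.

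Next, I would argue that all $a_\ell$ must coincide. Since each equation equals $\tfrac{1}{2} \neq 0$, neither $1 - a_{\ell_1}$ nor $a_{\ell_2}$ can vanish, so divisions are legitimate. Using that $\Nop > 2$, pick any three indices $\ell_1 < \ell_2 < \ell_3$. Comparing $(1 - a_{\ell_1}) a_{\ell_2} = (1 - a_{\ell_1}) a_{\ell_3}$ forces $a_{\ell_2} = a_{\ell_3}$, while comparing $(1 - a_{\ell_1}) a_{\ell_3} = (1 - a_{\ell_2}) a_{\ell_3}$ forces $a_{\ell_1} = a_{\ell_2}$. Since the triple was arbitrary and $\Nop \ge 3$, all $a_\ell$ are equal to a common value $a$.

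Finally, I would substitute back: the remaining condition becomes $(1 - a)\, a = \tfrac{1}{2}$, i.e.\ $a^2 - a + \tfrac{1}{2} = 0$, whose roots are $a = \tfrac{1}{2} \pm \tfrac{i}{2}$. Correspondingly, $b = 1 - a = \tfrac{1}{2} \mp \tfrac{i}{2}$, yielding exactly the two conjugate coefficient sets displayed in \cref{tab:OSNNS2P2}. The main step to be careful about is the step where I divide through; once I rule out the degenerate choices $a_\ell = 0$ or $a_\ell = 1$ (both contradicting the nonzero right-hand side), the rest is a short linear and then quadratic calculation. The essential role of $\Nop > 2$ enters precisely because the collapsing argument to a single variable needs at least three indices; for $\Nop = 2$ the system is underdetermined, which is why an extra free parameter survives (explaining the familiar one-parameter family of two-stage, second-order splittings in the two-operator case).
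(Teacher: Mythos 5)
Your proposal is correct and follows essentially the same route as the paper: specialize the order conditions of \cref{rmk:order_cond} to $s=2$, use the product conditions (which require $\Nop>2$ to supply enough index pairs) to force all coefficients to a common value, and solve the resulting quadratic $\gamma(1-\gamma)=\tfrac12$. The only cosmetic difference is that you eliminate the second-stage coefficients via the linear conditions first and then collapse the $a_\ell$ using triples of indices, whereas the paper first deduces the equalities among the stage coefficients from the product conditions and then invokes the first-order conditions; both arguments hinge on $\Nop>2$ at the same essential point.
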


\begin{proof}
	Using \cref{rmk:order_cond}, the order conditions for two-stage,
	second-order, $\Nop$-split operator-splitting methods are
	\begin{subequations}
		\begin{align}
			& \osalpha{1}{\ell} + \osalpha{2}{\ell} = 1,  \text{ for } \ell = 1, 2,\dots, \Nop, \label{order_1_eq}\\ 
			&  \osalpha{2}{\ell_1}\osalpha{1}{\ell_2} = \frac{1}{2},
			\text{ for } \ell_1 = 1, 2,\dots, \Nop-1,\ \ell_2 = \ell_1+1, \ell_1 +2,\dots, \Nop. 
			\label{order_2_eq}
		\end{align}
	\end{subequations}
	
	We note that $\osalpha{2}{\ell_1} \neq 0$ for all
	$\ell_1 = 1,2,\dots, \Nop-1$; otherwise, the order conditions
	\cref{order_2_eq} cannot be satisfied. Therefore, using the
	conditions
	\begin{equation*}
		\displaystyle \osalpha{2}{1}\osalpha{1}{\ell_2} = \frac{1}{2}, \text{ for } \ell_2 = 2,3,\dots, \Nop,
	\end{equation*}
	we conclude that 
	\begin{equation*}
		\osalpha{1}{2} = \osalpha{1}{3} = \cdots = \osalpha{1}{\Nop} =: \gamma.
	\end{equation*}
	Similarly, using the conditions 
	\begin{equation*}
		\displaystyle \osalpha{2}{\ell_1}\osalpha{1}{\Nop} = \frac{1}{2}, \text{ for } \ell_1 = 1,2,\dots, \Nop-1,
	\end{equation*}
	we conclude that 
	\begin{equation*}
		\osalpha{2}{1} = \osalpha{2}{2} = \cdots = \osalpha{2}{\Nop-1} =: \eta.
	\end{equation*}
	Moreover, with $\Nop>2$, the first-order conditions \cref{order_1_eq} imply 
	\begin{equation} \label{eq:order_1_derive}
		\left\{\begin{aligned}
			\osalpha{1}{1} + \eta &= 1,\\ 
			\gamma + \eta &= 1, \\
			\gamma + \osalpha{2}{\Nop} &= 1.
		\end{aligned}   \right.
	\end{equation}  
	Hence, $\osalpha{1}{1} = \gamma$ and $\osalpha{2}{\Nop} = \eta$. 
	Using \cref{order_1_eq} and \cref{order_2_eq},
	\begin{equation} \label{simplified_ord_cond}
		\left\{\begin{aligned}
			\gamma + \eta &= 1, \\
			\eta  \gamma &= \frac{1}{2} .
		\end{aligned}   \right.
	\end{equation}  
	Solving \cref{simplified_ord_cond}, we get the following two sets
	of solutions:
	\begin{equation}
		\left\{ \begin{array}{l}
			\displaystyle \gamma = \frac{1}{2} + \frac{i}{2}, \\[1.5ex]
			\displaystyle \eta = \frac{1}{2} - \frac{i}{2}, 
		\end{array}   \right.
		\text{ and } 
		\left\{ \begin{array}{l}
			\displaystyle \gamma = \frac{1}{2} - \frac{i}{2}, \\[1.5ex]
			\displaystyle \eta = \frac{1}{2} + \frac{i}{2}. 
		\end{array}   \right.
	\end{equation}
	
\end{proof}

%	 two cases: 
%	\begin{itemize}
%		\item 
%		Case 1: 
%		
%		If $\Nop = 2$ then 
%		\begin{equation}
%			\left\{\begin{aligned}
%				& \osalpha{1}{1} + \eta = 1,\\ 
%				& \gamma + \osalpha{2}{\Nop} = 1.
%			\end{aligned}   \right.
%		\end{equation}  
%		
%		Moreover, order-2 condition implies $\displaystyle \gamma \eta = \frac{1}{2}$. Hence there are infinitely many solutions:
%		\begin{table}[htbp]
%			\centering
%			\renewcommand{\arraystretch}{2.5} 
%			\begin{tabular}{|c|c|c|}
%				\hline 
%				$k$ & $\displaystyle \alpha_k^{[1]}$ & $\displaystyle \alpha_k^{[2]}$ \\
%				\hline 
%				1 & $\displaystyle 1 - \frac{1}{2(1-\beta)}$   & $1-\beta$  \\
%				\hline 
%				2 &  $\displaystyle \frac{1}{2(1-\beta)}$  &   $\beta$ \\
%				\hline 
%			\end{tabular}
%			\caption{Coefficients $\osalpha{k}{\ell}$ for the $2$-split, 2-stage, second-order method. }
%			\label{tab:OSN2S2P2}
%		\end{table}

\begin{rmk}
	We note that the second equation in \cref{eq:order_1_derive} does
	not hold for $\Nop=2$. In fact, solving the order conditions for
	two-stage, second-order, $2$-split methods leads to a one-parameter
	family of solutions as shown in \cref{tab:OSN2S2P2}
	\cite{spiteri_wei_FSRK,wei_spiteri_pppds}.
	\begin{table}[htbp]
		\centering
		\renewcommand{\arraystretch}{2.5} 
		\begin{tabular}{|c|c|c|}
			\hline 
			$k$ & $\displaystyle \alpha_k^{[1]}$ & $\displaystyle \alpha_k^{[2]}$ \\
			\hline 
			1 & $\displaystyle \frac{2\OS22b-1}{2\OS22b-2}$ &  $\displaystyle 1-\OS22b$\\
			\hline 
			2 & $\displaystyle \frac{1}{2\OS22b-2}$ & $\OS22b$ \\
			\hline 
		\end{tabular}
		\caption{Coefficients $\osalpha{k}{\ell}$ for the
			one-parameter family of two-stage, second-order, $2$-split
			methods. }
		\label{tab:OSN2S2P2}
	\end{table}
\end{rmk}

\begin{rmk}
	The methods in~\cref{tab:OSNNS2P2} can be seen as a composition of
	the Lie--Trotter method over complex steps
	$\displaystyle \frac{1}{2} \pm \frac{i}{2}$ and
	$\displaystyle \frac{1}{2} \mp \frac{i}{2}$. Accordingly, we refer
	to the method with the upper signs as the Complex Lie--Trotter-2
	(\CLTTwo) method. We refer to the conjugate method as
	\CLTTwoConj.
\end{rmk}

\begin{rmk}
	The \CLTTwo\ method for $\Nop=3$ appeared
	in~\cite{Auzinger2015_3split}, but there was no indication that it
	generalized to arbitrary $\Nop$.
\end{rmk}

As mentioned, real-valued $N$-split operator-splitting methods of
order $p \geq 3$ require backward-in-time steps in every operator. A
common technique to avoid such steps is to use complex-valued
operator-splitting methods with positive real parts. Several works,
e.g.,~\cite{castella2009,blanes2013complex,chambers2003}, propose
composition using complex coefficients to derive complex-valued
operator-splitting methods. The main theorem used to construct
high-order methods through composition is given in \cite{hairer2006}:
\begin{thm} \label{thm:composition}
	Let $\basemethod{\Dt}{p}$ be a one-step method of order $p$. If 
	\begin{equation} \label{comp_cond}
		\begin{aligned}
			\sigma_{p,1} + \cdots + \sigma_{p,m} & = 1, \\ 
			\sigma_{p,1}^{p+1} + \cdots + \sigma_{p,m}^{p+1} & = 0,
		\end{aligned}
	\end{equation}
	then the $m$-term composition method 
	\begin{equation} \label{m_term_comp}
		\newmethod{q} = \basemethod{\sigma_{p,m} \Dt}{p} \circ \cdots \circ \basemethod{\sigma_{p,1} \Dt}{p}  
	\end{equation}
	is of order $q \geq p+1$. 
\end{thm}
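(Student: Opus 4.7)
The plan is to use the exponential (modified-equation) representation of the base method and show that the two conditions in~\cref{comp_cond} annihilate the leading error term in the composition~\cref{m_term_comp}. First, I would encode the base method as an exponential of Lie derivatives: because $\basemethod{h}{p}$ has order $p$, there exist fixed operators $E_{p+1}, E_{p+2}, \dots$ (each a linear combination of nested commutators of $\DOp{1},\dots,\DOp{\Nop}$, depending only on the coefficients of the base method and not on $h$) such that
\begin{equation*}
\basemethod{h}{p} = \exp\!\left( h\,D + h^{p+1} E_{p+1} + h^{p+2} E_{p+2} + \cdots \right),
\end{equation*}
where $D = \DOp{1} + \DOp{2} + \cdots + \DOp{\Nop}$ generates the exact flow, so $\varphi_h = \exp(hD)$. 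This representation is a direct consequence of the generalized BCH machinery in~\cref{subsec:BCH_N}: the method of order $p$ matches the exact flow through order $h^p$, so its Magnus-type logarithm can only deviate from $hD$ at order $h^{p+1}$ and beyond.

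Second, I would substitute $h=\sigma_{p,i}\Dt$ into each factor of~\cref{m_term_comp} and combine the resulting exponentials via BCH. The leading term in the aggregate exponent is $\Dt\bigl(\sum_i \sigma_{p,i}\bigr)D$, which by the first equation of~\cref{comp_cond} equals $\Dt D = \log \varphi_{\Dt}$. The next contribution from the $E_{p+1}$ pieces collects to
\begin{equation*}
\Dt^{p+1}\!\left(\sum_{i=1}^m \sigma_{p,i}^{\,p+1}\right) E_{p+1},
\end{equation*}
which vanishes exactly by the second equation of~\cref{comp_cond}. Consequently,
\begin{equation*}
\newmethod{q} = \exp\!\bigl(\Dt D + \bigOh(\Dt^{p+2})\bigr) = \varphi_{\Dt} + \bigOh(\Dt^{p+2}),
\end{equation*}
so $\newmethod{q}$ has order $q \geq p+1$, as claimed.

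The main obstacle is to verify rigorously that no further terms of total $\Dt$-degree $p+1$ enter the composed exponent. Potential contaminators are BCH cross-commutators between the $\sigma_{p,i}\Dt\,D$ pieces and the $(\sigma_{p,j}\Dt)^{p+1}E_{p+1}$ pieces, but their lowest degree is $\Dt^{p+2}$, so they cannot disturb the cancellation at degree $\Dt^{p+1}$. The cleanest way to formalize this bookkeeping is to work in a Hall basis of the free Lie algebra generated by $\DOp{1},\dots,\DOp{\Nop}$ (equivalently, in the B-series formalism), where elementary differentials are linearly independent; matching coefficients of the formal power series in $\Dt$ then shows that the two scalar equations in~\cref{comp_cond} are both necessary and sufficient to eliminate every rooted-tree contribution at order $p+1$, which yields the claimed order increase.
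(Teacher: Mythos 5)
The paper does not actually prove this theorem; it is quoted as a known result from Hairer, Lubich, and Wanner, so there is no in-paper argument to compare against. Your proof is correct and, in fact, meshes well with the BCH machinery the paper develops in \cref{subsec:BCH_N}: writing $\basemethod{h}{p}=\exp\bigl(hD+h^{p+1}E_{p+1}+\cdots\bigr)$, noting that all iterated commutators among the pieces $\sigma_{p,i}\Dt\,D$ vanish identically (they are all multiples of the same operator $D$), that cross-commutators involving an $E_{p+1}$ piece first appear at degree $\Dt^{p+2}$, and that the two scalar conditions in \cref{comp_cond} respectively fix the coefficient of $\Dt D$ to one and kill the coefficient of $\Dt^{p+1}E_{p+1}$, is a complete argument. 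Two small caveats are worth making explicit. First, the representation of the exponent as an element of the free Lie algebra generated by $\DOp{1},\dots,\DOp{\Nop}$ is specific to splitting/composition methods, whereas the theorem is stated for an arbitrary one-step method of order $p$; for the general case you need the modified-equation (B-series) version of the same statement, which you correctly flag as the way to formalize the bookkeeping, but it should be the primary framing rather than a fallback. Second, the standard reference proof is more elementary: it expands the local error as $\basemethod{h}{p}(y)=\varphi_h(y)+h^{p+1}C(y)+\bigOh(h^{p+2})$, uses the group property $\varphi_{\sigma_{p,i}h}\circ\varphi_{\sigma_{p,j}h}=\varphi_{(\sigma_{p,i}+\sigma_{p,j})h}$ together with the first condition to see that the exact-flow parts compose to $\varphi_h$, and transports the per-substep errors $(\sigma_{p,i}h)^{p+1}C$ to the endpoint (the transport Jacobians are $I+\bigOh(h)$, so they only perturb at order $h^{p+2}$), giving a total leading error $h^{p+1}\bigl(\sum_i\sigma_{p,i}^{p+1}\bigr)C(y)$ annihilated by the second condition. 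That route avoids any Lie-algebraic structure and applies verbatim to arbitrary one-step methods; your route buys a cleaner statement of \emph{which} algebraic objects carry the order-$(p+1)$ defect, which is in the spirit of this paper but is not logically necessary for the result.
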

To minimize the computational cost of an $m$-term composition
\cref{m_term_comp}, we only consider two-term compositions in this
paper. In particular, we consider the two-term compositions
proposed in \cite{Hansen2009}. As stated in theorem 2.2 in
\cite{Hansen2009}, given a base method $\Psi_{\Dt}$ of order two,
one can compose higher-order methods $\Psi_{\Dt}(p)$ with positive
real parts of order $p$, $p=3,4,5,6$, using the following chain of
two-term compositions
\begin{equation} \label{eq:composition} \Psi_{\Dt}(p) =
	\Psi_{\sigma_{p,2} \Dt} (p-1) \Psi_{\sigma_{p,1}\Dt} (p-1),
\end{equation}
where $\Psi_{\Dt} (2) = \Psi_{\Dt}$ and the coefficients
$\sigma_{p,1}$ and $\sigma_{p,2}$ are
\begin{equation} \label{eq:composition_coeff} \sigma_{p,1} =
	\frac{1}{2} + i \frac{\sin(\pi/p)}{2+2\cos(\pi/p)} \text{ and }
	\sigma_{p,2} = \overline{\sigma}_{p,1}.
\end{equation} 
The coefficients satisfy \cref{comp_cond}. Because the arguments
(phases) of the coefficients $\sigma_{p,i}$ for $i=1,2$ are added
together in each composition, this method does not yield
operator-splitting methods with positive real part of arbitrarily
high order. Hence, it is important to choose $\sigma_{p,i}$ with
small argument. As stated in \cite{Hansen2009}, the coefficients
\cref{eq:composition_coeff} satisfy \cref{comp_cond} and have the
smallest positive argument.  The strategy of achieving higher
order through the choice of coefficients from the two-term
composition \cref{eq:composition} results in $\Psi_{\Dt}(p)$ with
coefficients having negative real parts for $p\geq 7$.

For the purposes of this study, we only consider third-order
methods derived from a two-term composition~\cref{eq:composition}
of second-order methods. The third-order method derived from the
two-term composition of \CLTTwo\ is denoted by \CLTThree\ and has
coefficients given in \cref{tab:OSNNS4P3}. The third-order method
derived from the two-term composition~\cref{eq:composition} of
Strang is denoted by \CStrangThree\ and has coefficients given in
\cref{tab:complex_strang_p3}.

\begin{table}[htbp]
	\centering
	\renewcommand{\arraystretch}{2.5} 
	\begin{tabular}{|c|c|}
		\hline 
		$k$ & $\displaystyle \{\alpha_k^{[\ell]}\}_{\ell = 1, 2,\dots, \Nop}$ \\
		\hline 
		1 & $\displaystyle \frac{1}{4} - \frac{1}{4\sqrt{3}} + \left(\frac{1}{4} + \frac{1}{4\sqrt{3}}\right)i$  \\
		\hline 
		2 &  $\displaystyle  \frac{1}{4} + \frac{1}{4\sqrt{3}} + \left(-\frac{1}{4} + \frac{1}{4\sqrt{3}}\right)i$  \\
		\hline 
		3 & $\displaystyle \frac{1}{4} + \frac{1}{4\sqrt{3}} + \left(\frac{1}{4} - \frac{1}{4\sqrt{3}}\right)i$ \\
		\hline 
		4 &  $\displaystyle  \frac{1}{4} - \frac{1}{4\sqrt{3}} + \left(-\frac{1}{4} - \frac{1}{4\sqrt{3}}\right)i$ \\
		\hline 
	\end{tabular}
	\caption{Coefficients $\osalpha{k}{\ell}$ for the four-stage,
		third-order, $\Nop$-split method derived from \CLTTwo. }
	\label{tab:OSNNS4P3}
\end{table}

\begin{table}[htbp]
	\centering
	\renewcommand{\arraystretch}{2.5} 
	\begin{tabular}{|c|c|c|c|c|c|}
		\hline 
		$k$ & $\displaystyle \alpha_k^{[1]}$ & $\displaystyle \alpha_k^{[2]}$ &  $\cdots$ &  $\displaystyle \alpha_k^{[\Nop-1]}$ &   $\displaystyle \alpha_k^{[\Nop]}$ \\
		\hline 
		1 & $\displaystyle\frac{\sigma_{3,1}}{2}$ & $\displaystyle\frac{\sigma_{3,1}}{2}$  &  $\cdots $ & $\displaystyle\frac{\sigma_{3,1}}{2}$ & $\sigma_{3,1}$ \\
		\hline 
		2 &  $0$ & $0$ & $\cdots$ &  $\displaystyle\frac{\sigma_{3,1}}{2}$ & $0$  \\
		\hline 
		$\vdots$ &   &  & $\iddots$ &  &   \\
		\hline 
		$\Nop$ & $\displaystyle \frac{\sigma_{3,1}+\sigma_{3,2}}{2}$ & $\displaystyle \frac{\sigma_{3,2}}{2}$  &  $\cdots $ & $\displaystyle \frac{\sigma_{3,2}}{2}$ & $\sigma_{3,2}$ \\
		\hline 
		$\vdots$ &   &  & $\iddots$ &  &   \\
		\hline 
		$2\Nop -1$ &  $\displaystyle \frac{\sigma_{3,2}}{2}$ & $0$ & $\cdots$  &   $\cdots$  &  $0$ \\
		\hline 
	\end{tabular}
	\caption{Coefficients $\osalpha{k}{\ell}$ for the
		$(2\Nop-1)$-stage, third-order, $\Nop$-split method derived
		from Strang with constants
		$\sigma_{3,1} = \frac{1}{2} + \frac{\sqrt{3}}{6}i $ and
		$\sigma_{3,2} = \frac{1}{2} - \frac{\sqrt{3}}{6}i $.}
	\label{tab:complex_strang_p3}
\end{table}

%%%%%%%%%%%%%%%%%%%%%%%%%%%%%%%%%%%%%%%%%%%%%%%%%%%%%%%%%%%%%%%%%%%%%%%%%%%%%%%%%%%%
%%%%
%%%%  Numerical experiments  
%%%%
%%%%%%%%%%%%%%%%%%%%%%%%%%%%%%%%%%%%%%%%%%%%%%%%%%%%%%%%%%%%%%%%%%%%%%%%%%%%%%%%%%%%

\section{Numerical experiments}
\label{sec:complexLT_num_exp}

In this section, we give two numerical examples using the second-order
$\Nop$-split \CLTTwo\ methods (given in \cref{tab:OSNNS2P2}) and the
two third-order $\Nop$-split methods (given in \cref{tab:OSNNS4P3} and
\cref{tab:complex_strang_p3}) obtained from a two-term composition of
the second-order \CLTTwo\ and Strang methods, respectively. We verify
their orders of convergence and compare their performance on real- and
complex-valued differential equations against real-valued
operator-splitting methods.

%\subsection{$4$-split Allen-Cahn problem}
%
%The first example is the Allen-Cahn problem described by the PDE 
%\begin{equation}
%\label{eq:allen_cahn}
% \dv{u}{t} = a (u_{xx}+u_{yy}) + b(u-u^3),
%\end{equation}
%where $a = 0.1$ and $b=1$. The problem is solved on a spatial domain of $[0,1] \times [0,1]$ with an uniform spatial grid of size $\Delta x = \Delta y = 0.02$ over the time period $[t_0, t_f] = [0,10] $. A reference solution to this problem is obtained using $2$-split Strang method with small $\Dt$ until there are at least $8$ matching digits. To verify the order of convergences, we split \cref{eq:allen_cahn} into four operators: 
%\begin{equation}\label{eq:allen_cahn_split}
%	\dv{u^{[1]}}{t} = au_{xx}, \quad \dv{u^{[2]}}{t} = au_{yy}, \quad \dv{u^{[3]}}{t} = bu, \quad \dv{u^{[4]}}{t} = -bu^3. 
%\end{equation}
%Note that such a splitting is not necessary for a simple problem. We only do it show the convergence of $4$-split methods. The numerical solution is compared with the reference solution at $10$ equally spaced temporal point $[1:1:10]$. 

\subsection{2D ADR problem}

We first consider a two-dimensional advection-diffusion-reaction
problem,

\begin{equation} \label{eq:ADR} 
	\pdv{u}{t} = - \alpha\, \nabla \cdot u %\left(\pdv{u}{x} + \pdv{u}{y}\right)
	+ \epsilon\,	\nabla^2 u + \gamma u \left(u-\frac{1}{2}\right)(1-u),
\end{equation}
with homogeneous Neumann boundary conditions, initial conditions
\begin{equation*}
	u(x, y, 0) = 256\,(xy(1-x)(1-y))^2 + 0.3,
\end{equation*}
and solved on $t \in [0, 0.1]$, $(x, y) \in [0, 1]^2$ with parameters
$\alpha = -10$, $\epsilon=1/100$, $\gamma = 100$.  For the spatial
discretization, we use central finite differences on a uniform grid
with grid size $\displaystyle \Delta x = \Delta y = 1/40$. A reference
solution of the resulting method-of-lines ODEs is obtained using
MATLAB's {\tt ode45} function with {\tt atol = 3e-16 } and {\tt rtol =
	3e-16}. The \cref{eq:ADR} is split into $4$ operators:
\begin{equation}
	\label{eq:ADR_split}
	\begin{aligned}
		& \pdv{u^{[1]}}{t} = - \alpha\, \nabla \cdot u^{[1]}, \\
		%\left(\pdv{u^{[1]}}{x} + \pdv{u^{[1]}}{y} \right), \\
		& \pdv{u^{[2]}}{t} = \epsilon \pdv[2]{u^{[2]}}{x}, \\
		& \pdv{u^{[3]}}{t} = \epsilon \pdv[2]{u^{[3]}}{y}, \\
		& \pdv{u^{[4]}}{t} = \gamma u^{[4]} \left(u^{[4]}-\frac{1}{2}\right)(1-u^{[4]}),
	\end{aligned}
\end{equation}
and solved using operator-splitting methods: Strang, \CLTTwo,
\CLTThree, and \CStrangThree, while the sub-systems are solved using
the classical fourth-order explicit Runge--Kutta method (RK4). The
error of the numerical solution is measured using the $\ell_2$ norm of
the error between the reference solution and the numerical solution at
$t_\text{f} = 0.1$. As shown in \cref{fig:adr_convergence}, all four
methods exhibit the correct order of convergence. As shown in the
work-precision diagram \cref{fig:adr_wp}, both third-order methods
\CLTThree\ and \CStrangThree\ are more efficient than their
second-order counterparts for tighter tolerances.

\begin{figure}[htbp]
	\centering
	\includegraphics[width = \textwidth]{./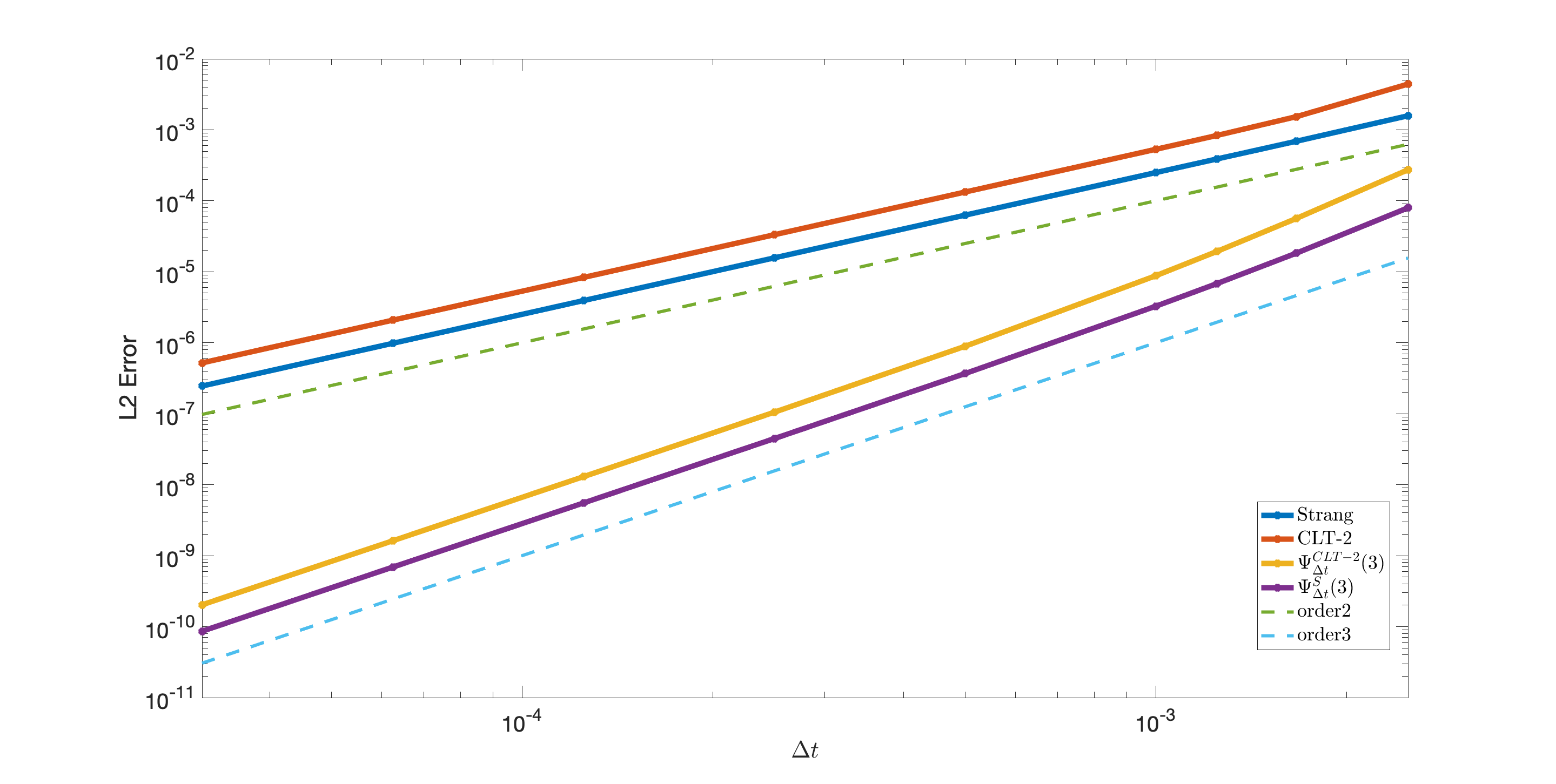}
	\caption{Order of convergence of Strang, \CLTTwo, \CLTThree, and \CStrangThree\ applied to the 2D ADR problem
		\cref{eq:ADR}.}
	\label{fig:adr_convergence}
\end{figure}

\begin{figure}[htbp]
	\centering
	\includegraphics[width = \textwidth]{./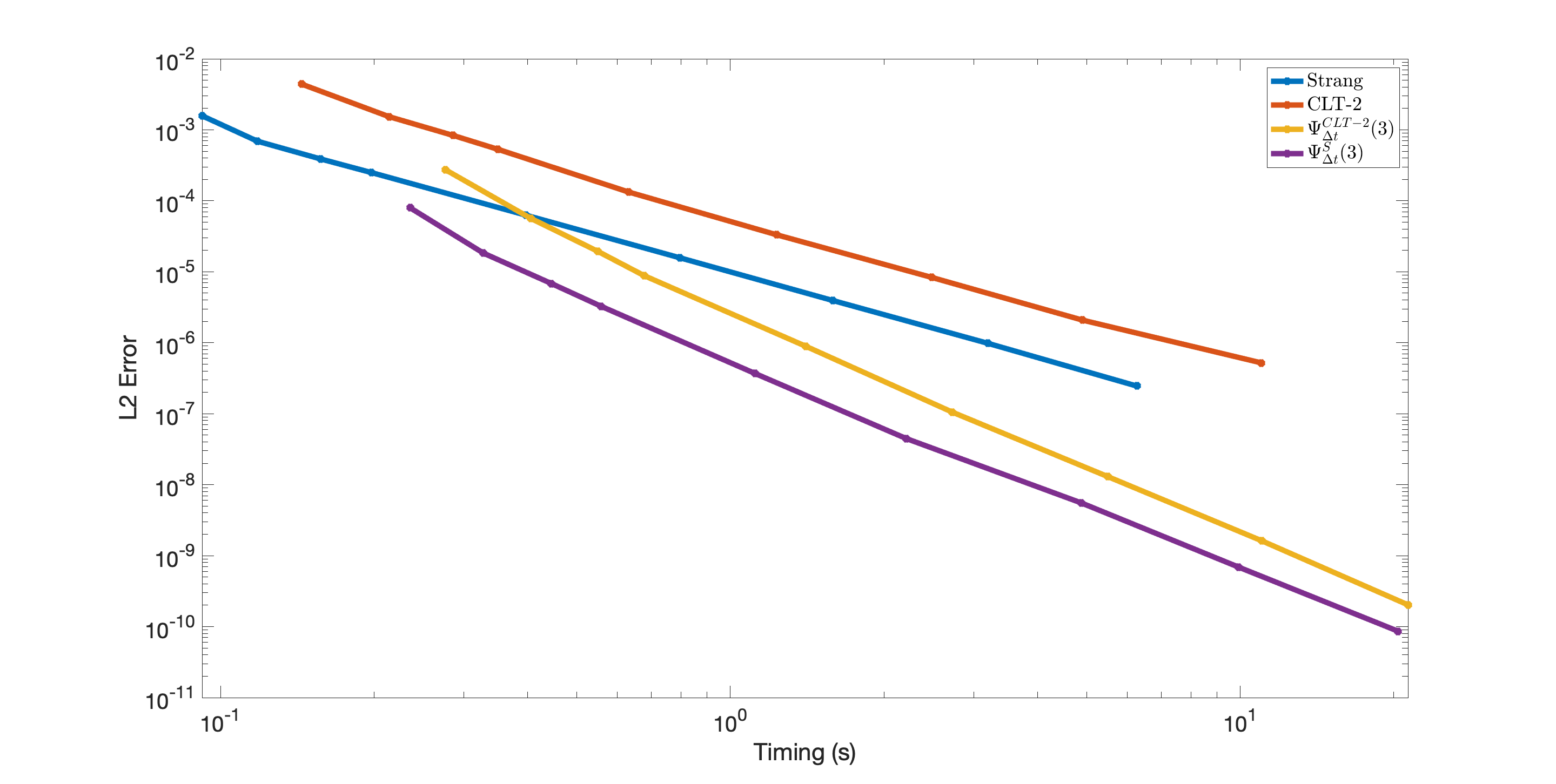}
	\caption{Work precision diagram of Strang, \CLTTwo, \CLTThree, and \CStrangThree\ applied to the 2D ADR problem
		\cref{eq:ADR}.}
	\label{fig:adr_wp}
\end{figure}

\subsection{$3$-split complex-valued ODE} 

In this section, we consider the complex-valued ODE 
\begin{equation}
	\label{eq:complex_ode}
	\dv{u}{t}  = iu + 0.05\,u - 0.5\,u^3
\end{equation}
with initial condition $u(0) = 0.1$ solved over the time interval
$[0,100]$. A complex-valued DE can be solved in two ways: as a
complex-valued DE or as a system of real-valued DEs of twice the
size. We examine the performance of real- and complex-valued
operator-splitting methods on complex differential equations. For this
example, we split \cref{eq:complex_ode} into $3$ operators.  To solve
\cref{eq:complex_ode} as a complex-valued ODE using $3$-split operator
splitting methods, we split \cref{eq:complex_ode} as follows:
\begin{align} \label{complex_ode_split} && \complexde^{[1]} (u^{[1]}) = iu^{[1]}, &&
	\complexde^{[2]}(u^{[2]}) = 0.05\, u^{[2]}, && \complexde^{[3]}(u^{[3]}) = -0.5\,(u^{[3]})^3.
\end{align}

To solve \cref{eq:complex_ode} as a system of real-valued differential
equations, let $u = x+iy$. Then \cref{eq:complex_ode} can be rewritten
as %a system of real-valued differential equations of $x$ and $y$:
\begin{equation} \label{complex_ode_real}
	\left\{ 
	\begin{aligned}
		\dv{x}{t} & = - y+  0.05\,x  +0.15\,xy^2 -0.5\,x^3,  \\
		\dv{y}{t} & = \hspace*{2ex} x+0.05\,y -0.15\,x^2y + 0.5\,y^3. \\
	\end{aligned}  \right. 
\end{equation}
\Cref{complex_ode_real} can be split into three operators,
\begin{equation}
	\label{complex_ode_real_split} 
	\begin{aligned}
		&	\realde^{[1]}(x^{[1]},y^{[1]}) = \begin{bmatrix}
			-y^{[1]} \\
			\hspace*{1.5ex} x^{[1]} \\
		\end{bmatrix},  \\
		& \realde^{[2]}(x^{[2]},y^{[2]}) = \begin{bmatrix}
			0.05\,x^{[2]} \\ 
			0.05\,y^{[2]} \\
		\end{bmatrix},\\
		&
		\realde^{[3]}(x^{[3]},y^{[3]}) = \begin{bmatrix}
			\hspace*{2ex} 0.15\,x^{[3]}(y^{[3]})^2-0.5\,(x^{[3]})^3 \\
			-0.15\,(x^{[3]})^2y^{[3]} + 0.5\,(y^{[3]})^3
		\end{bmatrix}.
	\end{aligned}
\end{equation}

A reference solution of \cref{eq:complex_ode} is generated using
MATLAB {\tt ode45} with {\tt atol=1e-13} and {\tt rtol=1e-16}.  For
our experiments, we apply Strang, \CLTTwo, \CLTThree, \CStrangThree,
and \ThirdOrderReal~\cite{auzinger2016practical} to \cref{complex_ode_split} and
\cref{complex_ode_real_split}. The sub-systems are all solved using
Kutta's third-order method~\cite{kutta1901}. The error of the
numerical solution is measured by the mixed root-mean-square (MRMS)
error at $M$ space-time points defined by
\begin{equation*}
	\text{error} = [\text{MRMS}]_X = \sqrt{\frac{1}{M}
		\sum\limits_{i=1}^M \left(\frac{X_i^{\text{ref}}-X_i}{1+\abs{X_i^{\text{ref}}}} \right)^2},
\end{equation*}
where $X_i^{\text{ref}}$ and $X_i$, respectively, denote the reference
solution and the numerical solution at $100$ equally spaced temporal
points $x_n = n$, $n=1,2,\dots,100$.  
All methods exhibit the expected order of
convergence as seen in \cref{fig:complex_ode_convergence}.

\begin{figure}[htbp]
	\centering
	\includegraphics[width = \textwidth]{./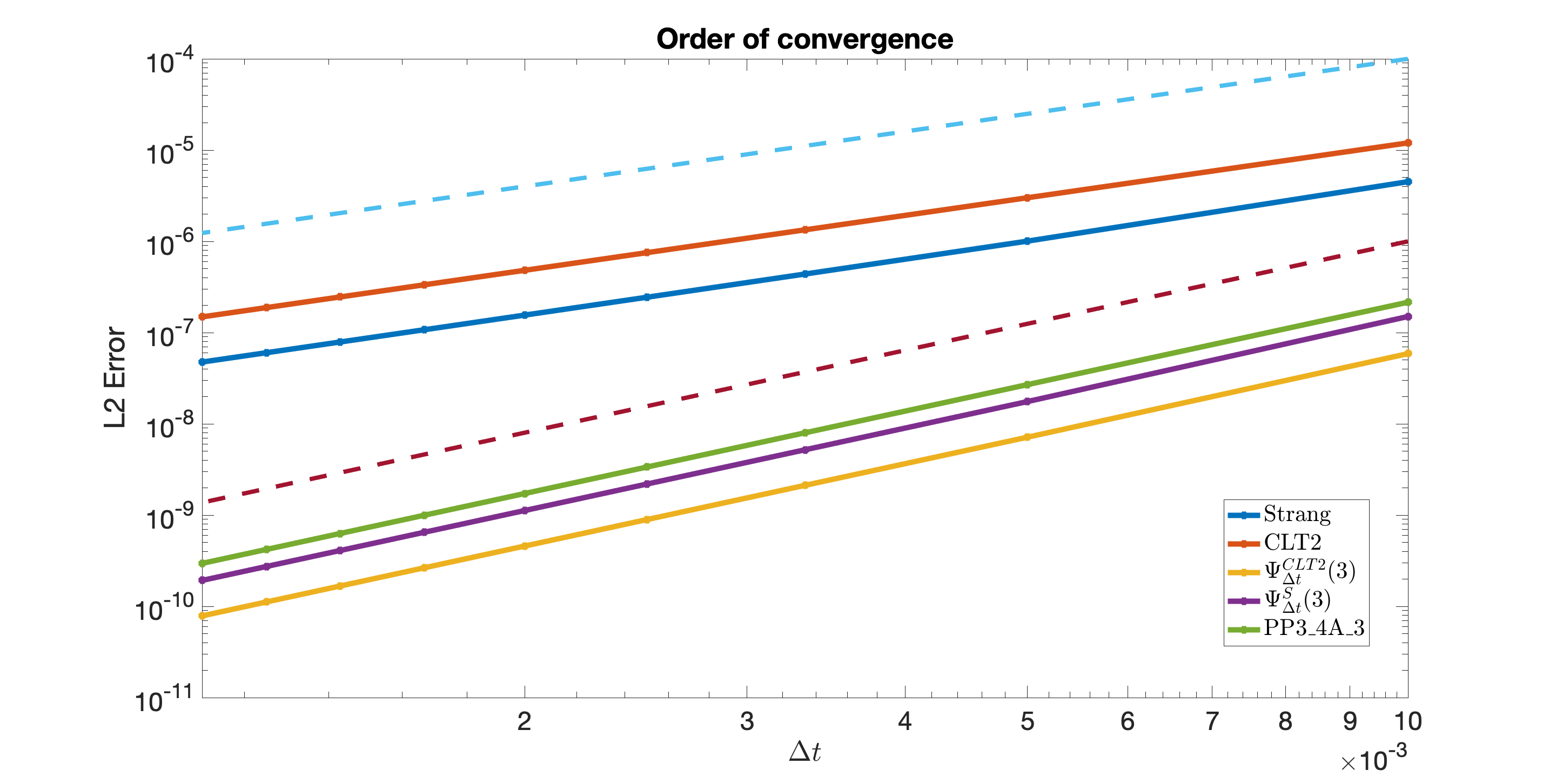}
	\caption{Convergence of Strang, \CLTTwo, \CLTThree, \CStrangThree\
		and \ThirdOrderReal~applied to the complex ODE
		\cref{eq:complex_ode}.}
	\label{fig:complex_ode_convergence}
\end{figure}

We also present the work-precision diagram of solving
\cref{eq:complex_ode} as a complex ODE and as a system of real ODEs
\cref{complex_ode_real}. We note that, in all cases, it is more
efficient to solve the system directly as a complex-valued ODE. For
relaxed tolerances, \CLTTwo~can be more efficient than Strang,
and \CLTThree\ can be more efficient than \CStrangThree. For tighter
tolerances, \CLTThree\ and \CStrangThree\ are comparable in terms of
efficiency.

\begin{figure}[htbp]
	\centering
	\includegraphics[width = \textwidth]{./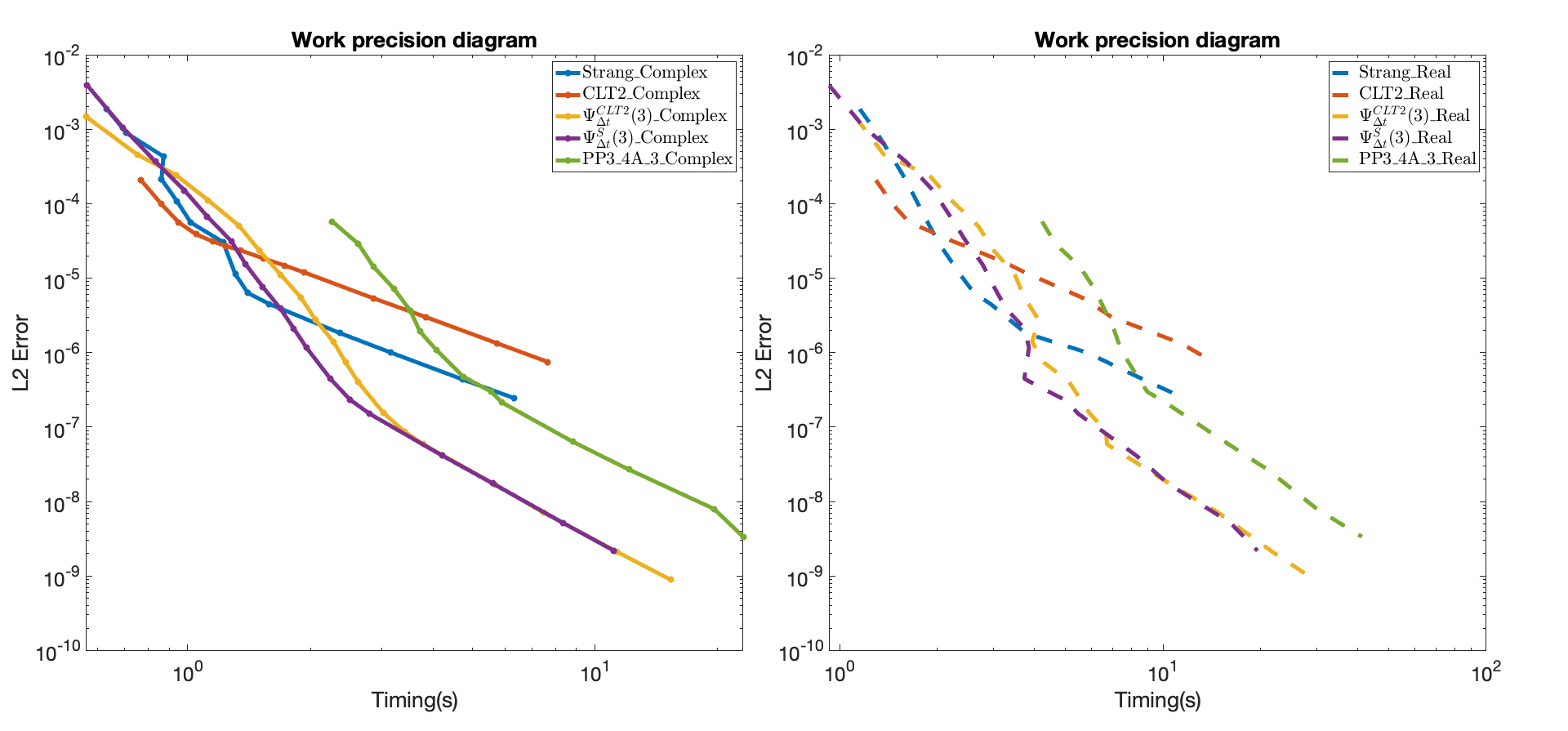}
	\caption{Work-precision diagram of Strang, \CLTTwo, \CLTThree,
		\CStrangThree, and \ThirdOrderReal\ applied to the complex ODE
		\cref{eq:complex_ode} and the system of real ODEs
		\cref{complex_ode_real}.}
	\label{fig:complex_ode_work_precision}
\end{figure}

%%%%%%%%%%%%%%%%%%%%%%%%%%%%%%%%%%%%%%%%%%%%%%%%%%%%%%%%%%%%%%%%%%%%%%%%%%%%%%%%%%%%
%%%%
%%%%  Conclusions and future work
%%%%
%%%%%%%%%%%%%%%%%%%%%%%%%%%%%%%%%%%%%%%%%%%%%%%%%%%%%%%%%%%%%%%%%%%%%%%%%%%%%%%%%%%%

\section{Conclusions}% and future work}
\label{sec:complexLT_conclude}

In this paper, we presented the order conditions up to second order
for general $\Nop$-split operator-splitting methods and derived a pair
of computationally efficient two-stage, second-order, complex-valued
operator-splitting methods for arbitrary $\Nop$. We have called these
methods \emph{complex Lie--Trotter} (\CLTTwo) methods, and they join
the only other two general $\Nop$-split operator-splitting methods
known, namely, the Lie--Trotter and Strang methods. The \CLTTwo\
methods can also be used as base methods to derive efficient
high-order $\Nop$-split operator-splitting methods using
composition. We compared the performance of the complex-valued
operator-splitting methods against real-valued operator-splitting
methods of the same order on both real and complex ODEs. We
demonstrate that the third-order complex-valued operator-splitting
methods with positive real parts outperform third-order real-valued
operator-splitting methods. However, due to the increased
computational cost of complex arithmetic relative to real arithmetic
on standard present-day hardware, second-order complex-valued
operator-splitting methods may often be less efficient than the Strang
method, which only uses real arithmetic. For complex-valued ODEs, on
the other hand, complex-valued operator-splitting methods can be more
efficient than real-valued methods.  % We note that high-order methods
% derived from compositions are usually computationally expensive. We
% are working on writing a program in MATLAB to derive and solve order
% conditions for an arbitrary $\Nop$ and use this program to find
% optimum splitting methods for particular $\Nop$ values.

% \appendix
% \section{An example appendix} 

\section*{Acknowledgments}
Support for this work was provided by the Natural Sciences and
Engineering Research Council of Canada under its Discovery Grant
program (RGPN-2020-04467).

\bibliographystyle{siamplain}
\bibliography{reference}
\end{document}